\numberwithin{equation}{section}
\theoremstyle{plain}
\newtheorem{theorem}{Theorem}[section]
\newtheorem{lemma}[theorem]{Lemma}
\newtheorem*{de-lemma}{Lemma}
\newtheorem{proposition}[theorem]{Proposition}
\theoremstyle{remark}
\newtheorem{remark}[theorem]{Remark}
\theoremstyle{definition}
\DeclareMathOperator{\dv}{div}
\DeclareMathOperator{\curl}{curl}
\newcommand{\R}{\mathbb{R}}
\providecommand{\MR}{\relax\ifhmode\unskip\space\fi MR }
\providecommand{\href}[2]{#2}
\begin{document}

\title[Anisotropic Ginzburg-Landau vortices]{
%Equivariant vortices of the anisotropic Ginzburg-Landau system
Entire vortex solutions of negative degree for the anisotropic Ginzburg-Landau system
}

\author[M. Kowalczyk]{Micha{\l} Kowalczyk}
\address{Departamento de Ingenier\'{\i}a Matem\'atica and Centro
de Modelamiento Matem\'atico (UMI 2807 CNRS), Universidad de Chile, Casilla
170 Correo 3, Santiago, Chile}
\email {kowalczy@dim.uchile.cl}

\author[X.Lamy]{Xavier Lamy}
\address{Institut de Math\'ematiques de Toulouse; UMR 5219, Universit\'e de Toulouse; CNRS, UPS IMT, F-31062 Toulouse Cedex 9, France}
\email{Xavier.Lamy@math.univ-toulouse.fr}

\author[P. Smyrenelis]{Panayotis Smyrnelis}
\address{Basque Center for Applied Mathematics, Alameda de Mazarredo 14, 48009 Bilbao, Spain}
\email{psmyrnelis@bcamath.org}

\thanks{M.K. was partially funded by Chilean research grant FONDECYT  1210405 and CMM Conicyt PIA AFB170001. X.L. received  support  from ANR project ANR-18-CE40-0023 and COOPINTER project IEA-297303. Part of this work was done during his visit in the Center of Mathematical Modeling and was partially funded by CMM Conicyt PIA AFB170001 project.
P.M. was supported by the National Science Centre, Poland (Grant No. 2017/26/E/ST1/00817), by REA - Research Executive Agency - Marie Sk{\l}odowska-Curie Program (Individual Fellowship 2018) under Grant No. 832332, by the Basque Government through the BERC 2018-2021 program, by the Spanish State Research Agency through BCAM Severo Ochoa excellence accreditation SEV-2017-0718, and through the project PID2020-114189RB-I00 funded by Agencia Estatal de Investigaci\'on (PID2020-114189RB-I00 / AEI / 10.13039/501100011033). 
}
\subjclass{Primary 35J91; 35J20; Secondary 35B40; 35B06; 35B25}

%\date{\today}

\begin{abstract} The anisotropic Ginzburg-Landau system
\begin{align*}
\Delta u+\delta \nabla (\dv u) +\delta \curl^*(\curl u)=(|u|^2-1) u, 
\end{align*}
for $u\colon\mathbb R^2\to\mathbb R^2$ and $\delta\in (-1,1)$,
 models the formation of vortices in liquid crystals.
 We prove the existence of entire solutions such that $|u(x)|\to 1$ and $u$ has a prescribed topological degree $d\leq -1$ as $|x|\to\infty$,
for small values of the anisotropy parameter $|\delta| < \delta_0(d)$.
Unlike the isotropic case $\delta=0$, this cannot be reduced to a one-dimensional radial equation. 
We obtain these solutions by minimizing the anisotropic Ginzburg-Landau energy in an appropriate class of equivariant maps, with respect to a  finite  symmetry subgroup.
%We point out that the equivariant constraint is instrumental in our construction, since the anisotropic vortices of degree $|d|\geq 2$ are unstable and can only be observed indirectly in the experiments. 
\end{abstract}

\maketitle	

\section{Introduction}

We study entire solutions $u:\R^2\to\R^2$ of the anisotropic Ginzburg-Landau equation
\begin{align}\label{eq:glani}
\Delta u+\delta \nabla (\dv u) +\delta \curl^*(\curl u)=(|u|^2-1) u, 
\end{align}
where $\delta\in (-1,1)$ is a fixed constant and we define $\curl u=\partial_1 u_2 -\partial_2 u_1$ and its adjoint $\curl^*=(\partial_2,-\partial_1)$. This is the Euler-Lagrange equation corresponding to the anisotropic Ginzburg-Landau energy
\begin{align}\label{eq:Fani}
F(u;\Omega)=\int_{\Omega}\frac 12 |\nabla u|^2 + \frac\delta 2 \left( (\dv u)^2 -(\curl u)^2\right) +\frac 14 (1-|u|^2)^2,\quad\Omega\subset\R^2.
\end{align}
Equation  \eqref{eq:glani} and its associated energy \eqref{eq:Fani}
 arise in the description of 2D point defects in some liquid crystal configurations, such as smectic-$C^*$ thin films \cite{phil1} and nematics close to the Fr\'eedericksz transition \cite{clerc14,clerc1,clerc2}. It has also been proposed as a toy model to understand more complex liquid crystal equations \cite{sternberg1}. While simplified isotropic equations (corresponding here to $\delta=0$) are often used to model liquid crystals, it should be stressed that real liquid crystal materials are always anisotropic, and this reduced symmetry gives rise to nontrivial mathematical challenges.

In the isotropic case $\delta=0$, equation \eqref{eq:glani} is the classical Ginzburg-Landau equation
\begin{equation}\label{eq:gliso}
\Delta  u=(|u|^2-1) u,
\end{equation}
which has been extensively studied due to its physical applications and its mathematical richness -- see the monographs \cite{bethuel1,pacard,SS} and the references therein. 
It is known that for any prescribed degree (or winding number) $d\neq 0$, the isotropic Ginzburg-Landau equation \eqref{eq:gliso} admits a unique solution of the form
\begin{equation}\label{glv}
v_d(re^{i\theta})=\eta_d(r)e^{id\theta},\qquad \eta_d(0)=0,\;\lim_{+\infty}\eta_d=1,\,\eta_d\geq 0,
\end{equation}
where the radial profile $\eta_d$ solves a certain ODE (ordinary differential equation) problem \cite{herve,chen94}. 

In the anisotropic case $\delta\neq 0$ however, the only solutions to \eqref{eq:glani} of the form $u=\eta(r)e^{i(d\theta+\theta_0)}$ are of degree $d=1$, and phase shift $\theta_0\equiv 0\mod\pi/2$ \cite{clerc14}. Therefore, in strong contrast with the isotropic case, the problem of finding
entire solutions $u\colon\R^2\to\R^2$ of \eqref{eq:glani}, such that $|u(x)|\to 1$ and $u$ has a prescribed topological degree $d\neq 1$  as $|x|\to\infty$, cannot be reduced to a one-dimensional ODE. 

Our main result is the existence of solutions of any negative prescribed degree $d$ at infinity, provided the anisotropy $\delta$ is small enough.

\begin{theorem}\label{t:main}
For any $d=-1,-2,\ldots$, there exists $\delta_0(d)>0$  such that for small enough anisotropy $|\delta| <\delta_0(d)$,
there are at least two distinct, smooth entire solutions $u\colon\R^2\to\R^2$  of the anisotropic Ginzburg-Landau equation \eqref{eq:glani} satisfying
\begin{align*}
\int_{\R^2}(1-|u|^2)^2 <2\pi d^2\quad\text{ and }\quad\deg(u;\partial D_r)=d\text{ for }r\gg 1.
\end{align*}
\end{theorem}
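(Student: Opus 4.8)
The plan is to construct the solutions variationally, as minimizers of the anisotropic energy \eqref{eq:Fani} within a class of maps constrained by a finite symmetry group, exploiting that for small $\delta$ the energy is a perturbation of the isotropic one. First I would fix the symmetry. Writing $u$ as a complex-valued function and using that the anisotropic density equals $2\delta\,\mathrm{Re}\big((\partial u)^2\big)$ with $\partial=\tfrac12(\partial_1-i\partial_2)$, a direct computation shows that under the combined rotation $(g\cdot u)(x)=R_{d\alpha}\,u(R_\alpha^{-1}x)$ the anisotropic term is multiplied by $e^{2i(d-1)\alpha}$, while the Dirichlet and potential terms are invariant. Choosing $\alpha=\pi/(1-d)$ makes $e^{2i(d-1)\alpha}=1$, so $F$ is invariant under the cyclic group $G=\langle g\rangle$ of order $k=2(1-d)$, and the isotropic vortex $v_d=\eta_d(r)e^{id\theta}$ is $G$-equivariant. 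I would then define the admissible class $\mathcal A_d$ of finite-energy $G$-equivariant maps with the asymptotics of $v_d$, and minimize the renormalized energy $\lim_{R\to\infty}\big(F(u;D_R)-F(v_d;D_R)\big)$, which is finite on $\mathcal A_d$ because both maps share the same logarithmically divergent Dirichlet tail. Note that the anisotropic term integrates to zero on $v_d$ (the angular integral of $\cos(2(d-1)\theta)$ vanishes since $d\neq 1$), so $v_d$ is a legitimate comparison map even though it does not solve \eqref{eq:glani}.

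The core of the argument is the direct method together with a no-escape/no-splitting compactness analysis for a minimizing sequence $u_n\in\mathcal A_d$. Here the symmetry is decisive: any vortex of $u_n$ sitting away from the origin must occur in a $G$-orbit of size $k=2(1-d)$, and since $k>|d|$ the total degree carried by such an orbit is a nonzero multiple of $k$, incompatible with the prescribed total degree $d$ unless all off-origin local degrees vanish or cancel; the cancelling case is ruled out by a lower bound on the renormalized energy of any configuration in which mass escapes to infinity or splits into separated vortices. This forces the minimizing sequence to converge, locally and modulo the usual elliptic estimates, to a single $G$-equivariant degree-$d$ vortex centered at the origin, which is the desired minimizer; standard elliptic regularity then gives smoothness, with Euler--Lagrange equation \eqref{eq:glani}. \textbf{This compactness step is the main obstacle}: unlike the isotropic case there is no radial reduction, so excluding non-radial splitting and decay to infinity must be carried out through the interplay of the symmetry constraint and quantitative energy estimates, uniformly in the small parameter $\delta$.

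To produce two distinct solutions, I would repeat the minimization over a second admissible class obtained by shifting the equivariance phase by $\alpha/2$ (equivalently, imposing the symmetry about a reflection axis rotated by $\pi/k$). At leading order in $\delta$ the anisotropy induces an effective $\cos$-type potential on the vortex orientation whose minima are exactly these two inequivalent phases---mirroring the degree-one dichotomy $\theta_0\equiv 0\bmod \pi/2$ recorded in the introduction---so the two minimizers have distinct core orientations and cannot be mapped to one another by any element of $G$, hence are genuinely different solutions.

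Finally, to obtain the quantitative bound $\int_{\R^2}(1-|u|^2)^2<2\pi d^2$, I would establish a Pohozaev identity for \eqref{eq:glani}: for the isotropic vortex it yields exactly $2\pi d^2$, and the anisotropic correction, controlled via sharp decay estimates of $u$ toward its $\mathbb{S}^1$-valued limit at infinity, produces the strict inequality, reflecting that the solutions are genuinely two-dimensional. Justifying the vanishing and the sign of the boundary contributions in this identity, which requires precise asymptotics at infinity, is the principal technical point of this last step.
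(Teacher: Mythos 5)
Your overall strategy (equivariant minimization to force a negative degree, with smallness of $\delta$ entering through quantitative energy comparisons) is the same in spirit as the paper's, and your identification of the rotational symmetry group is correct. But your variational setup has a fatal flaw: for $\delta\neq 0$ the renormalized energy $\lim_{R\to\infty}\bigl(F(u;D_R)-F(v_d;D_R)\bigr)$ is \emph{not bounded below} on your admissible class, so the direct method cannot start. The reason is that the angular profile $e^{id\theta}$ of $v_d$ is not the optimal $\mathbb S^1$-valued profile for the anisotropic energy: the minimal equivariant circle energy satisfies $C_{\delta,d}^\pm=E(\zeta;\mathbb S^1)<\pi d^2$ for $\delta\neq 0$, with minimizer $\zeta\neq e^{id\theta}$ (this is Remark~\ref{r:zeta} of the paper, and it is precisely why the radial ansatz fails for $d\neq 1$). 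Now take the competitor $u_T$ which equals $\min(r,1)\,\zeta(\theta)$ on $D_T$, interpolates on the annulus $T\leq|x|\leq 2T$ between the two equivariant, degree-$d$ circle maps $\zeta$ and $e^{id\theta}$ at cost $O(1)$ (the cost depends only on the conformal modulus of the annulus, not on $T$), and coincides with $v_d$ outside $D_{2T}$. Then $u_T$ belongs to your class $\mathcal A_d$ and its renormalized energy is $-(\pi d^2-C^\pm_{\delta,d})\ln T+O(1)\to-\infty$. The paper avoids exactly this trap by never using $v_d$ as reference: it minimizes on finite disks $D_R$ with the anisotropic circle minimizer $\zeta$ as boundary datum, passes to the limit $R\to\infty$ to get a \emph{locally} minimizing entire solution (Proposition~\ref{p:entire}), and only afterwards identifies the degree (Proposition~\ref{p:deg}). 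Incidentally, the same fact $C^\pm_{\delta,d}<\pi d^2$, combined with the finite-disk Pohozaev identity, is what gives the bound $\int(1-|u|^2)^2<2\pi d^2$ for free; your proposed route through sharp asymptotics at infinity and a sign claim for the anisotropic boundary corrections is both much harder and unjustified as stated.

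Your compactness/no-splitting argument also contains a genuine error: it is not true that off-origin vortex orbits are ``incompatible with the prescribed total degree $d$.'' The equivariance forces the degree at the origin only modulo $2(1-d)$, so a configuration with a vortex of degree $2-d$ at the origin together with one orbit of $2(1-d)$ vortices of degree $-1$ is fully equivariant and has total degree exactly $d$. Nothing topological excludes it; it is excluded only \emph{energetically}, because its leading coefficient $(2-d)^2+2(1-d)$ exceeds $d^2$. Moreover, for $\delta\neq0$ this comparison is polluted by anisotropic constants: the sharp lower bound available is $(1-\delta)\pi d_\infty^2\ln R$ (Lemma~\ref{l:lower}) while the upper bound from equivariant test configurations is $(1+3\delta)\pi\bigl(d^2+|d_\infty-d|\bigr)\ln R$ (Lemmas~\ref{l:Elog} and \ref{l:upper}, the first of which requires a nontrivial differential-inequality argument just to get logarithmic growth). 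The threshold $\delta_0(d)$ in the theorem comes exactly from requiring $(1-\delta)d_\infty^2>(1+3\delta)(d^2+|d_\infty-d|)$ for all admissible $d_\infty\neq d$; your sketch never formulates this quantitative step, and the phrase ``uniformly in the small parameter $\delta$'' hides the place where the smallness assumption is actually used. Finally, your mechanism for producing two distinct solutions (shifting the reflection axis and invoking a formal effective potential on the vortex orientation) is heuristic; the paper's cleaner device is to impose the two reflection equivariances $u(\bar z)=\pm\overline{u(z)}$, i.e. the homomorphisms $\mu_d^\pm$, which are exchanged by $u\mapsto iu$ and give solutions converging to $v_d$ and $iv_d$ as $\delta\to0$.
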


\begin{remark}\label{r:rad}
These solutions are, for $\delta\neq 0$, not radially symmetric. This is very different from the isotropic case, where the existence of non-radial solutions is a famous open question. As $\delta\to 0$, the two solutions obtained in Theorem~\ref{t:main} converge to the radial solutions $v_d$ and $iv_d$ \eqref{glv} of the classical Ginzburg-Landau equation \eqref{eq:gliso}. Indeed, they converge to solutions with finite potential energy $\int_{\R^2}(1-|u|^2)^2 <\infty$, of degree $d$ at infinity, and with the symmetry constraint \eqref{eq:dequiv1} for $n=1-d$. That symmetry constraint forces these solutions to have a zero of degree $D$ at the origin, with $|D|\geq |d|$, and they must therefore be radial thanks to a result of Mironescu \cite[Th\'eor\`eme~2]{mironescu-radial}.
\end{remark}

\begin{remark}\label{r:sym}
The two solutions described in the Theorem are distinct modulo the  elementary symmetries of equation \eqref{eq:glani}, which  are the following transformations associated to rotation equivariance, reflection equivariance, and translation invariance: 
\begin{align*}
&u(z) \longrightarrow \tau e^{-i\alpha}u(e^{i\alpha }z)\quad \alpha\in\R,\;\tau\in\lbrace \pm 1\rbrace,\\
& u(z)\longrightarrow \overline{u(\bar z)},\\
& u(z)\longrightarrow u(z+a),\quad a\in\R^2.
\end{align*}
These transformations preserve the equation \eqref{eq:glani} and its associated energy.
Note that in the isotropic case $\delta=0$ we have invariance under any rotation of the variable $u(e^{i\alpha}z)$ and of the target $e^{i\alpha}u(z)$ separately, but for $\delta\neq 0$ this is only true for $\alpha\equiv 0$ modulo $\pi$.
\end{remark}

The solutions we obtain in Theorem~\ref{t:main} are invariant under a well-chosen finite subgroup of the transformations described in Remark~\ref{r:sym}: this is the key to prescribing the degree at infinity. Specifically, we impose the symmetry constraints
\begin{subequations}\label{eq:dequiv}
\begin{equation}\label{eq:dequiv1}
u(e^{i\frac{\pi}{n}}z)=-e^{i\frac{\pi}{n}}u(z)=e^{i\frac{(1-n)\pi}{n}}u(z),
\end{equation}
for some integer $n\geq 2$,
and in addition
\begin{equation}\label{eq:dequiv2}
u(\bar z)=\pm \overline{u(z)},
\end{equation}
\end{subequations}
to distinguish the two different solutions (depending on the sign $\pm$). Thanks to the symmetric criticality principle \cite{palais} (see also \cite[Proposition~7.1]{lamy14bifur} for a simplified version applicable to the present case), minimizing the energy under these symmetry constraints still provides a solution of \eqref{eq:glani}. Moreover, the equivariance \eqref{eq:dequiv1} imposes a degree constraint: if $u$ is continuous and does not vanish on a fixed circle $\partial D_r$, a continuous choice of phase $\psi\colon \R\to\R$ such that $u/|u|(re^{i\theta})=e^{i\psi(\theta)}$  must satisfy 
\begin{align*}
\psi\left(\theta+\frac{\pi}{n}\right)= \psi(\theta) +\frac{(1-n)\pi }{n}+2k_0\pi,
\end{align*}
for some $k_0\in\mathbb Z$ and all $\theta\in\mathbb R$, and iterating this translation yields
$\psi(2\pi)-\psi(0)=2\pi (1-n) + 4 k_0 n\pi $, hence
\begin{align*}
\deg(u;\partial D_r)\equiv (1-n) \mod 2n.
\end{align*}
In the equivalence class $(1-n)+2n \mathbb Z$ the degree with minimal absolute value is $d=1-n$ ($d=-1,-2,\ldots$). This will enable us to conclude that minimizing solutions  have degree $d$ at infinity. 

\begin{remark}\label{r:dpos}
The construction of solutions of degree $d\geq 2$ is an open problem. Note however that from the point of view of physics, the most relevant degrees are $d=\pm 1$ since vortices of degree $|d|\geq 2$ are expected to be unstable.
\end{remark}
With the above discussion in mind, the proof of Theorem~\ref{t:main} will proceed in two steps. First we obtain in Proposition~\ref{p:entire} entire $d$-equivariant solutions with finite potential energy $\int (1-|u|^2)^2<\pi d^2$, by minimizing the energy in large disks $D_R$ and letting $R\to\infty$. This first step works for any value of $\delta\in (-1,1)$ and provides a solution of degree $d_\infty \equiv d$ modulo $2(1-d)$. The second step, in Proposition~\ref{p:deg}, consists in using the minimizing property to show that $d_\infty$ is in fact equal to $d$. This is where we require $|\delta| <\delta_0(d)$. We note that  the value of $\delta_0(d)$, given explicitly in Proposition~\ref{p:deg} can be somewhat improved however whether $\delta_0(d)=1$ or is strictly less than $1$ is an open problem. Theorem~\ref{t:main} follows directly from Propositions~\ref{p:entire} and \ref{p:deg}.

The article is organized as follows. In Section~\ref{s:equiv} we determine the equivariant classes that are compatible with the anisotropic equation \eqref{eq:glani}. In Section~\ref{s:entire} we prove the existence of entire equivariant solutions, while in Section~\ref{s:deg} we characterize their degree for small enough anisotropy. In Section~\ref{s:poho} we derive Pohozaev identity which plays a crucial role in the computations of Section~\ref{s:deg}.

\section{Anisotropic equivariant classes}\label{s:equiv}

Let $G$ and $\Gamma$ be two subgroups of $O(2)$, and let $\mu:G\to \Gamma$ be a group homomorphism. Let also $\Omega\subset \R^2$ be a set invariant by the action of $G$. By definition, a map $u:\Omega \to \R^2$ is $\mu$-equivariant if 
\begin{equation}\label{equiv}
u(gx)=\mu(g)u(x), \ \forall x\in\Omega,\ \forall g\in G.
\end{equation} 
We refer to  \cite{schw} for a discussion of such maps, and to \cite{bates1,bates2} for examples of equivariant solutions. 

An equivariant class is compatible with \eqref{eq:glani} (cf. \cite{palais}), if given any $\mu$-equivariant map $u$, the integrand of \eqref{eq:Fani} is invariant by the action of $G$. Clearly, \eqref{equiv} implies that for every $g\in G$, and $x \in \Omega$, we have
\begin{align*}
&|u(gx)|=|u(x)|, 
\quad
\nabla u(gx)=\mu(g)\circ (\nabla u (x))\circ g^{-1}, \quad |\nabla u(gx)|=|\nabla u (x)|,
\end{align*}
and, letting $\sigma (x)=-x$ denote the antipodal map,
\begin{align*}
(\dv u)(gx)=\pm (\dv u)(x)\quad &\Longleftrightarrow\quad \mu(g)=g \text{ or } \mu(g)=\sigma \circ g,
\\
(\curl u)(gx)=\pm (\curl u)(x)\quad &\Longleftrightarrow\quad \mu(g)=g \text{ or } \mu(g)=\sigma \circ g.
\end{align*}
Thus, the anisotropic equivariant classes of \eqref{eq:glani} are detemined by the homomorphisms $\mu:G\to\Gamma$ satisfying
\begin{equation}\label{condhom}
\forall g\in G: \ \mu(g)=g \text{ or } \mu(g)=\sigma \circ g.
\end{equation}
For instance, the solution $x\mapsto v_1(x/\sqrt{1+\delta})$ of \eqref{eq:glani} (where $v_1$ is defined in \eqref{glv}), is equivariant with respect to the trivial homomorphism $\iota:O(2)\to O(2)$, $\iota(g)=g$, $\forall g\in O(2)$. On the other hand, the solution $x\mapsto i v_1(x/\sqrt{1-\delta})$ of \eqref{eq:glani} is equivariant with respect to the homomorphism $\tilde \iota:O(2)\to O(2)$, such that $\tilde\iota(\mathbf{s})=\sigma \circ \mathbf{s}$ for every reflection $\mathbf {s}\in O(2)$, and $\tilde\iota(\mathbf{r})=\mathbf{r}$ for every rotation $\mathbf{r}\in O(2)$.

Moreover, we point out that the symmetry constraints introduced in \eqref{eq:dequiv} are equivalent to equivariance with respect to the homomorphisms $\mu_d^\pm$, $d=-1,-2,\ldots$ defined below. For every $n=1-d\geq 2$, we denote by $D_{2n}$ the dihedral group generated by the rotation $\mathbf{r}_{2n}$ of angle $\pi/ n$, and the reflection $\mathbf{s}_0$ with respect to the $x_1$ coordinate axis. Setting 
\begin{align*}
\mu_d^+(\mathbf{s}_0)& =\mathbf{s}_0, \quad \mu_d^+(\mathbf{r}_{2n})=\sigma\circ \mathbf{r}_{2n}=\mathbf{r}_{2n}^d,
\\
\mu_d^-(\mathbf{s}_0)&=\sigma\circ\mathbf{s}_0, \quad \mu_d^-(\mathbf{r}_{2n})=\sigma\circ \mathbf{r}_{2n}=\mathbf{r}_{2n}^d,
\end{align*}
we can see that these choices for $\mu_d^\pm(\mathbf{s}_0)$ and $\mu_d^\pm(\mathbf{r}_{2n})$ yield homomorphisms $\mu_d^\pm:D_{2n}\to D_{2n}$, such that $\mu_d^\pm$-equivariance is equivalent to \eqref{eq:dequiv}. Actually, the homomorphisms $\tilde \iota$, and $\mu_d^\pm$ (as well as their restrictions to subgroups), are the only nontrivial homomorphisms satisfying \eqref{condhom}. 

\begin{figure}[h]
\includegraphics[width=.8\textwidth]{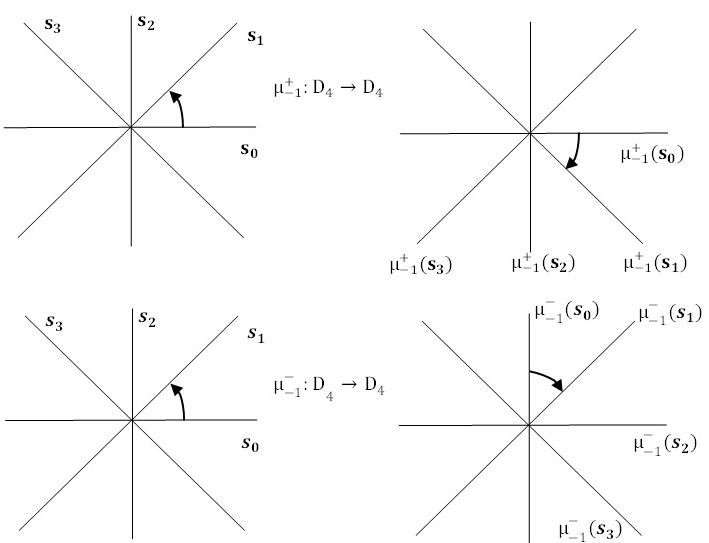}
\caption{For $n=2$ and $d=-1$, the images by the  homomorphisms $\mu_{-1}^\pm:D_4\to \mu_{-1}^\pm(D_4)=D_4$ of the reflections $\mathbf{s}_i:=\mathbf{r}_{4}^i\mathbf{s}_0$ ($i=0,1,2,3$).}
\label{fig1}
\end{figure}

\begin{figure}[h]
\includegraphics[width=.8\textwidth]{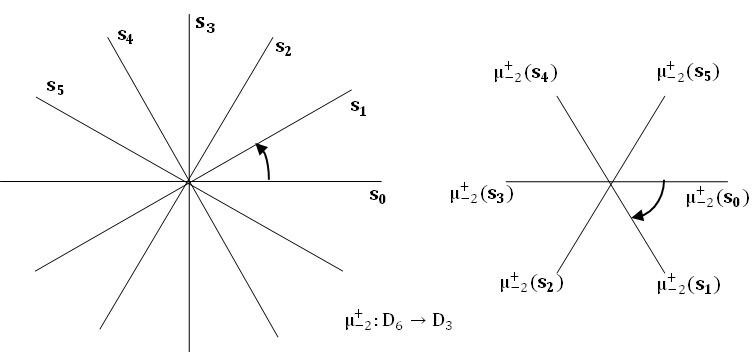}
\caption{For $n=3$ and $d=-2$, the images by the homomorphism $\mu_{-2}^+:D_6\to \mu^+_{-2}(D_6)=D_3$ of the reflections $\mathbf{s}_i:=\mathbf{r}_{6}^i\mathbf{s}_0$ ($i=0,1,2,\ldots,5$).}
\label{fig2}
\end{figure}

Examples of $\mu_d^+$-equivariant maps are $u(re^{i\theta})=f(r)e^{iD\theta}$ for any $D\in d +2(1-d)\mathbb Z$ and real-valued $f(r)$. The transformation $u\mapsto iu$ provides a bijection from $\mu_d^+$-equivariant maps onto $\mu_d^-$-equivariant maps. We also notice that given a reflection ${\mathbf s}\in D_{2n}$, the image by a $\mu_d^\pm$-equivariant map of the reflection line of ${\mathbf s}$, is contained in the reflection line of $\mu_d^\pm ({\mathbf s})$. Figures \ref{fig1} and \ref{fig2} show the images by the homomorphisms $\mu_d^\pm$, of the reflections $ \mathbf{s}_i:=\mathbf{r}_{2n}^i\mathbf{s}_0$ ($i=0,1,\ldots,2n-1$), and illustrate why
$\deg(u;\partial D_r)\equiv d \mod 2n$, for a $\mu_d^\pm$-equivariant map $u$ that does not vanish on the circle $\partial D_r$.

\section{Existence of entire equivariant solutions}\label{s:entire}

The anisotropic Ginzburg-Landau equation \eqref{eq:glani} is preserved by the transformation $(\delta,u)\to (-\delta, iu)$, so  without loss of generality we will restrict ourselves to nonnegative anisotropy $\delta\in [0,1)$. Moreover, thanks to the two-dimensional identity
$\Delta u= \nabla (\dv u) - \curl^*(\curl u)$, we may rewrite \eqref{eq:glani} as
\begin{equation}\label{eq:glapos}
(1-\delta)\Delta u+2\delta \nabla (\dv u) =\nabla W(u),\qquad W(u)=\frac 14 (1-|u|^2)^2,
\end{equation}
with associated energy functional
\begin{equation}\label{eq:glapose}
E(u,\Omega)=\int_\Omega \left[\frac{(1-\delta)}{2}|\nabla u|^2+\delta (\dv u)^2+W(u)\right], \qquad \delta\in[0,1).
\end{equation}
The energy density in \eqref{eq:glapose} differs from the previous one in \eqref{eq:Fani} by a multiple of the null Lagrangian $2\det(\nabla u)=(\dv u)^2+(\curl u)^2 -|\nabla u|^2$. Note that for all $\delta\in (-1,1)$ the left-hand side of the system \eqref{eq:glapos} is an elliptic operator, for which standard $L^p$ and $C^\alpha$ estimates are available.
 In this section we prove the existence of entire $\mu_d^\pm$-equivariant solutions (that is, satisfying the symmetry constraints \eqref{eq:dequiv} for $n=1-d$) of  \eqref{eq:glapos}. 
 
 \begin{proposition}\label{p:entire}
Let $d=-1,-2,\ldots $ and $\delta\in [0,1)$. There exists a smooth $\mu_d^\pm$-equivariant solution $u\colon\R^2\to\R^2$ of the anisotropic Ginzburg-Landau equation \eqref{eq:glapos} such that
\begin{align*}
2\int_{\mathbb R^2} W(u) < \pi d^2,
\end{align*}
$|u(x)|\longrightarrow 1$ as $|x|\to +\infty$,
and $u$ is locally minimizing in the sense that
\begin{align*}
E(u,D_R)\leq E(u+\xi,D_R)\quad\text{for any }\mu_d^\pm\text{-equivariant }\xi\in H^1_0(D_R;\R^2),
\end{align*}
and any centered open disk $D_R$ of finite radius $R>0$.
\end{proposition}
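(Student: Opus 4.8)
The plan is to construct $u$ as a limit of constrained energy minimizers on expanding disks, combining a direct-method argument with the Pohozaev identity of Section~\ref{s:poho}, as announced in the introduction.

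\emph{Step 1: minimization on a disk.} Fix $R>0$ and let $g_d$ be a fixed smooth $\mu_d^\pm$-equivariant map of modulus one and degree $d$ on $\partial D_R$ (for instance $v_d/|v_d|$, resp. $iv_d/|v_d|$, from \eqref{glv}). I would minimize $E(\cdot,D_R)$ over the affine class
\[
\mathcal A_R=\{u\in H^1(D_R;\R^2):\ u\text{ is }\mu_d^\pm\text{-equivariant and }u=g_d\text{ on }\partial D_R\}.
\]
This class is nonempty --- it contains an explicit competitor $w$ equal to $g_d$ for $|x|\ge 1$ and ramping the modulus down to $0$ at the origin --- and weakly closed in $H^1$, since $\mu_d^\pm$-equivariance is a pointwise linear relation stable under weak convergence. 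Because $\delta\in[0,1)$, the quadratic part $\frac{1-\delta}{2}|\nabla u|^2+\delta(\dv u)^2\ge\frac{1-\delta}{2}|\nabla u|^2$ of the integrand \eqref{eq:glapose} is a nonnegative, hence convex, quadratic form in $\nabla u$ that controls $\|\nabla u\|_{L^2}^2$; with the fixed boundary datum and Poincaré's inequality this yields coercivity in $H^1(D_R)$. Convexity gives weak lower semicontinuity of the gradient part, while the potential part is continuous for the strong $L^4$ convergence provided by Rellich's theorem, so the direct method produces a minimizer $u_R\in\mathcal A_R$. By the symmetric criticality principle \cite{palais}, $u_R$ solves \eqref{eq:glapos} weakly, and since its left-hand side is a constant-coefficient elliptic system, $L^p$ and Schauder estimates render $u_R$ smooth in $D_R$. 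A truncation/comparison argument should give a uniform bound $\|u_R\|_{L^\infty}\le C$.

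\emph{Step 2: the potential bound.} This is the heart of the matter. I would apply the Pohozaev identity of Section~\ref{s:poho} to $u_R$ on $D_R$. Since $g_d$ has modulus one the boundary potential term vanishes; since $g_d$ has degree $d$ and unit modulus its tangential derivative contributes exactly $\pi d^2$; and the normal-derivative boundary term should enter with a favorable sign, giving
\[
2\int_{D_R}W(u_R)\le \pi d^2,
\]
strictly for each finite $R$. Here I expect the real difficulty: tracking the sign and size of \emph{all} boundary terms in the \emph{anisotropic} identity, and --- more delicately --- retaining a strict inequality after $R\to\infty$, where the naive defect tends to $0$. Note that the radial competitor $x\mapsto v_d(x/\sqrt{1+\delta})$ has potential $(1+\delta)\pi d^2$, so for $\delta\neq 0$ the bound $<\pi d^2$ genuinely requires breaking radial symmetry, consistently with Remark~\ref{r:rad}.

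\emph{Step 3: passage to the limit.} The uniform $L^\infty$ bound and interior elliptic estimates bound $u_R$ in $C^{2,\alpha}(K)$ for every compact $K$, uniformly in $R\ge R(K)$. A diagonal extraction then gives $u_R\to u$ in $C^2_{\mathrm{loc}}(\R^2)$ along a subsequence, with $u$ a smooth entire solution of \eqref{eq:glapos}; its $\mu_d^\pm$-equivariance is preserved in the limit. The bound of Step~2 passes to the limit as $2\int_{\R^2}W(u)\le\pi d^2$. For local minimality, I would first observe that for any centered disk $D_\rho$ with $\rho<R$ and any $\mu_d^\pm$-equivariant $\xi\in H^1_0(D_\rho)$ one has $u_R+\xi\in\mathcal A_R$, whence $E(u_R,D_\rho)\le E(u_R+\xi,D_\rho)$; the $C^2_{\mathrm{loc}}$, hence strong $H^1_{\mathrm{loc}}$, convergence then transfers this to $E(u,D_\rho)\le E(u+\xi,D_\rho)$, which is the asserted property.

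\emph{Step 4: behavior at infinity.} Finally, $|u|\to 1$ at infinity follows from $\int_{\R^2}W(u)<\infty$ together with the uniform gradient bound: if $|u(x_k)|$ stayed away from $1$ along some $|x_k|\to\infty$, uniform continuity would force $W(u)\ge c>0$ on unit balls around the $x_k$, contradicting finiteness of $\int W$. Overall, I expect the two genuinely delicate points to be the potential bound of Step~2 --- above all its strict form after the limit --- and, secondarily, the uniform $L^\infty$ bound of Step~1 in the presence of the anisotropic divergence term $\delta(\dv u)^2$.
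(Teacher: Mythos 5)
Your overall architecture (minimization on $D_R$ over an equivariant class with $\mathbb S^1$-valued boundary data, Pohozaev identity for the potential bound, interior elliptic estimates plus a diagonal limit, transfer of the minimizing property) coincides with the paper's, but Step~2 contains a genuine gap --- one you correctly locate yourself but do not close, and it is precisely the heart of the proof. With your fixed boundary datum $g_d=e^{id\theta}$ (resp. $ie^{id\theta}$), the anisotropic Pohozaev identity \eqref{poz1} gives
\begin{align*}
2\int_{D_R}W(u_R)&=E(e^{id\theta};\mathbb S^1)-R\int_{\partial D_R}\left[\frac{1+\delta}{2}(\partial_r u_R\cdot e_r)^2+\frac{1-\delta}{2}(\partial_r u_R\cdot e_\theta)^2\right]\\
&=\pi d^2-(\mathrm{defect})_R,
\end{align*}
where $(\mathrm{defect})_R\geq 0$ has no $R$-uniform positive lower bound (for the limiting solution one in fact expects $\partial_r u\to 0$ at infinity, so the defect genuinely degenerates). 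Hence, even granting strictness at each finite $R$, the limit only yields the non-strict bound $2\int_{\R^2}W(u)\leq \pi d^2$; the strict inequality asserted in the Proposition --- and needed for Theorem~\ref{t:main} --- cannot be recovered from a family of strict inequalities whose margin tends to zero.

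The paper closes exactly this gap by a different choice of boundary datum: not the fixed map $e^{id\theta}$, but the minimizer $\zeta$ of the circle energy $E(\cdot;\mathbb S^1)$ among $\mu_d^\pm$-equivariant $\mathbb S^1$-valued maps, see \eqref{eq:zeta}. Since $e^{id\theta}$ (resp. $ie^{id\theta}$) is admissible in that one-dimensional minimization but does not solve the associated Euler--Lagrange equation when $\delta\neq 0$ (Remark~\ref{r:zeta}), one has the $R$-\emph{independent} strict inequality $C^\pm_{\delta,d}=E(\zeta;\mathbb S^1)<\pi d^2$; Pohozaev then gives $2\int_{D_R}W(u_R)\leq C^\pm_{\delta,d}$ uniformly in $R$, and this fixed constant, already strictly below $\pi d^2$, passes untouched to the limit. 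That is the single missing idea in your proposal. A secondary weak point: your uniform $\|u_R\|_{L^\infty}$ bound ``by truncation/comparison'' is not clearly available for $\delta\neq 0$, since truncating the modulus of $u$ need not decrease the anisotropic term $\delta(\dv u)^2$; the paper avoids any $L^\infty$ bound by testing \eqref{eq:glapos} against $\eta^2 u_R$ and using the uniform potential bound \eqref{eq:boundW} to get local $H^1$ bounds, which elliptic $L^2$ and Schauder estimates then upgrade to local $C^{2,\alpha}$ compactness.
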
 

Proposition~\ref{p:entire} is proved  by minimizing the energy \eqref{eq:glapose} among equivariant maps in the disk $D_R$, with well-chosen boundary condition on $\partial D_R$, and letting $R\to\infty$. The boundary condition on $\partial D_R$ is chosen to minimize the energy among $\mathbb S^1$-valued maps. For a map $u\colon\mathbb S^1\to\mathbb S^1$ (identified with its homogeneous radial extension satisfying $\partial_r u=0$), the energy is given by
\begin{align*}
E(u;\mathbb S^1)=\int_{\mathbb S^1} \left[\frac{1-\delta}{2} |\partial_\theta u|^2 + \delta (\partial_\theta u\cdot e_\theta)^2\right]\, d\theta,\qquad u\in H^1(\mathbb S^1;\mathbb S^1).
\end{align*}
Denoting by $H^1_{equ}(\mathbb S^1;\mathbb S^1)$ 
 the class of $H^1$ maps 
from $\mathbb S^1$ into itself 
that are $\mu_d^\pm$-equivariant, 
we define
\begin{align*}
C_{\delta,d}^\pm =\min_{u\in H^1_{equ}(\mathbb S^1;\mathbb S^1)} E(u;\mathbb S^1).
\end{align*}
The direct method of the calculus of variation ensures that the infimum is indeed attained:
\begin{align}\label{eq:zeta}
C_{\delta,d}^\pm=E(\zeta;\mathbb S^1)\quad\text{for some map }\zeta\in H^1_{equ}(\mathbb S^1;\mathbb S^1),
\end{align}
and we will use this map $\zeta$ as a boundary condition on the circle $\partial D_R$. 

\begin{remark}\label{r:zeta}
The map $\zeta$ can be locally lifted as $\zeta(\theta)=e^{i\psi(\theta)}$ and the real-valued $H^1$ phase $\psi$ solves  the Euler-Lagrange equation
\begin{align*}
\partial_\theta\left[ (1+ \delta\cos(2\theta-2\psi))\partial_\theta\psi \right]=\delta\sin(2\theta-2\psi)(\partial_\theta\psi)^2,
\end{align*} 
which ensures that $\psi$ is smooth (see e.g. \cite[Theorem~4.36]{dacorogna}). Hence $\zeta$ is smooth, and we have
\begin{align*}
C_{\delta,d}^+ < E(e^{id\theta},\mathbb S^1)=\pi d^2,\quad C_{\delta,d}^- < E(ie^{id\theta},\mathbb S^1)=\pi d^2,
\end{align*}
because the map $e^{id\theta}$ (resp. $ie^{id\theta}$) is $\mu_d^+$ (resp. $\mu_d^-$) equivariant but does not satisfy the Euler-Lagrange equation.
\end{remark}

\begin{proof}[Proof of Proposition~\ref{p:entire}]
By the direct method of the calculus of variations, for all $R>0$ there exists a $\mu_d^\pm$-equivariant map $u_R$ minimizing $E(\cdot,D_R)$ among all $\mu_d^\pm$-equivariant maps  $u\in H^1(D_R;\R^2)$ with boundary condition
\begin{align*}
u(Re^{i\theta})=\zeta(\theta)\qquad\forall \theta\in \R,
\end{align*}
where $\zeta$ is defined in \eqref{eq:zeta}. 
The map $u_R$ is a weak solution of the elliptic system \eqref{eq:glapos},  $\nabla W(u_R)\in L^2(D_R)$ and $\zeta$ is smooth, hence from standard elliptic regularity theory (see e.g. \cite[Theorem~4.14]{GM}) we have that $u\in W^{2,2}(D_R)$. 
By Sobolev embedding this implies $u\in C^\alpha(\overline D_R)$ for all $\alpha\in (0,1)$, and bootstrapping Schauder estimates (see e.g. \cite[Theorems~5.20, 5.21]{GM}) one deduces $u\in C^\infty(\overline D_R)$. In particular $u_R$ is regular enough, up to the boundary, to derive Pohozaev's identity (see Section \ref{s:poho})
\begin{align*}
2\int_{D_R} W(u)& = \int_{\mathbb S^1}\left[\frac{1-\delta}{2}|\partial_\theta\zeta|^2 +\delta (\partial_\theta \zeta\cdot e_\theta)^2 \right]
\\&\quad
-R\int_{\partial D_R}\left[
\frac{1+\delta}{2}(\partial_r u\cdot e_r)^2 
+
\frac{1-\delta}{2}(\partial_r u \cdot e_\theta)^2\right]\\
& \leq \int_{\mathbb S^1}\left[\frac{1-\delta}{2}|\partial_\theta\zeta|^2 +\delta (\partial_\theta \zeta\cdot e_\theta)^2 \right],
\end{align*}
and therefore
\begin{align}\label{eq:boundW}
2\int_{D_R}W(u_R) \leq C_{\delta,d}^\pm,
\end{align}
by definition \eqref{eq:zeta} of $\zeta$. 

Fix $x_0\in\R^2$ with $|x_0| < R-2$ and a cut-off function $\eta\in C_c^\infty(D_2(x_0))$ such that $\mathbf 1_{D_1(x_0)}\leq \eta\leq \mathbf 1_{D_2(x_0)}$ and $|\nabla\eta|\leq 2$. Multiplying the equation \eqref{eq:glapos} satisfied by $u_R$ by $\eta^2 u_R$ we obtain 
\begin{align*}
(1-\delta)\int_{D_2(x_0)} \eta^2 |\nabla u_R|^2 & \leq (1+3\delta) \int_{D_2(x_0)} \eta |\nabla\eta|\, |u_R|\, |\nabla u_R| \\
&\quad +  \int_{D_2(x_0)} \eta^2 \left| |u_R|^2-1 \right| \, |u_R|^2\\
&\leq  \frac{1}{2A}(1+3\delta)\int_{D_2(x_0)} \eta^2 |\nabla u_R|^2  + \frac{A}{2}\int_{D_2(x_0)} |\nabla\eta|^2 |u_R|^2 \\
&\quad + \int_{D_2(x_0)} \left( 4 W(u_R) +2 W(u_R)^{\frac 12} \right),
\end{align*}
for any $A>0$. Choosing $A=(1+3\delta)/(1-\delta)$ we deduce
\begin{align*}
\int_{D_2(x_0)} \eta^2 |\nabla u_R|^2&\leq \frac{1+3\delta}{1-\delta}\int_{D_2(x_0)} |\nabla\eta|^2 |u_R|^2 \\
&\quad + \frac{1}{1-\delta}\int_{D_2(x_0)} \left( 4 W(u_R) +2 W(u_R)^{\frac 12} \right)\\
&\leq \frac{1}{1-\delta}\left( C + C \int_{D_R}W(u_R)\right),
\end{align*}
for some absolute constant $C>0$. Thanks to the uniform bound \eqref{eq:boundW} we deduce that $u_R$ is bounded in $H^1(D_1(x_0))$, 
and as a consequence $\nabla W(u_R)$ is bounded in $L^2(D_1(x_0))$, uniformly with respect to $R>2$ and $x_0\in D_{R-2}$. 
From elliptic $L^2$ estimates \cite[Theorem~4.9]{GM} this implies that $u_R$ is bounded in $W^{2,2}(D_1(x_0))\subset C^{\alpha}(D_1(x_0))$ for any $\alpha\in (0,1)$, 
and by Schauder estimates \cite[Theorem~5.20]{GM}  in $C^{2,\alpha}(D_1(x_0))$,
 again  uniformly with respect to $R>2$ and $x_0\in D_{R-2}$.

By Ascoli's theorem and a diagonal argument we may therefore extract a subsequence $R\to +\infty$ such that $u_R$ converges in $C^1_{\mathrm{loc}}(\R^2;\R^2)$ to a solution $u\in C^\infty(\R^2;\R^2)$ of \eqref{eq:glapos}. By construction, $u$ is $\mu_d^\pm$-equivariant, and thanks to \eqref{eq:boundW} it satisfies $2\int_{\R^2}W(u)\leq C^\pm_{\delta,d}<\pi d^2$ (cf Remark~\ref{r:zeta} for the last inequality). 
 The finiteness of the potential energy $\int_{\R^2}W(u)$,  together with the uniform continuity of $u$ (thanks to the above uniform elliptic estimates in $D_1(x_0)$ for any $x_0\in\R^2$), implies that $\lim_{|x|\to\infty}|u(x)|=1$. Finally, for any $R'>R>0$ and $\mu_d^\pm$-equivariant $\xi\in H^1(\R^2;\R^2)$ with support inside $D_R$ we have
\begin{align*}
&E(u;D_R)-E(u+\xi;D_R)\\
&=E(u;D_R)-E(u_{R'};D_R) +E(u_{R'}+\xi;D_R)-E(u+\xi;D_R) \\
&\quad +E(u_{R'};D_R)
-E(u_{R'}+\xi;D_R) \\
&\leq  E(u;D_R)-E(u_{R'};D_R) +E(u_{R'}+\xi;D_R)-E(u+\xi;D_R),
\end{align*}
where we used the fact that $u_{R'}$ is minimizing and $\xi=0$ in $D_{R'}\setminus D_R$. 
The right-hand side converges to 0 as $R'\to\infty$, since $u_{R'}\to u$ in $C^1(\overline D_R)$. This proves the minimizing property of $u$.
\end{proof}

\section{The degree at infinity is equal to $d$ for small anisotropy}\label{s:deg}

So far we have constructed, for any $\delta\in [0,1)$, an entire solution $u$ of the anisotropic Ginzburg-Landau equation \eqref{eq:glapos} with finite potential energy $\int_{\R^2}W(u)<\infty$, whose degree at infinity $d_\infty=\deg(u;\partial D_r)$ for $r\gg 1$, is of the form
\begin{align*}
d_\infty = d + 2(1-d)N\quad\text{ for some }N\in\mathbb Z,
\end{align*}
as explained in the introduction.
Because $d\leq 0$, among all possible values of $d_\infty$ the one with lowest absolute value is precisely $d$. In this section, we use this to conclude that, provided the anisotropy $\delta$ is small enough, $d_\infty$ must in fact be equal to $d$.

\begin{proposition}\label{p:deg}
Let $d=-1,-2,\ldots $, $\delta\in [0,1)$ and $u$ be a $\mu_d^\pm$-equivariant,  locally minimizing  solution of the anisotropic Ginzburg-Landau equation \eqref{eq:glapos}  as described in Proposition \ref{p:entire}.  
Then
\begin{align*}
\deg(u;\partial D_r)=d\text{ for }r\gg 1,\text{ if }0\leq \delta < \delta_0(d)=\min\left( \frac{2-2d}{4d^2 +10 -10d},\frac{2}{\sqrt 3}-1\right).
\end{align*}
\end{proposition}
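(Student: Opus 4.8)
The plan is to argue by contradiction, using the Pohozaev identity of Section~\ref{s:poho} to turn the potential energy bound $2\int_{\R^2}W(u)<\pi d^2$ into a lower bound that is incompatible with a degree of large modulus at infinity. As recalled before the proposition, the $\mu_d^\pm$-equivariance together with $\int_{\R^2}W(u)<\infty$ forces $d_\infty=\deg(u;\partial D_r)=d+2(1-d)N$ for some $N\in\Z$. Since $d\le-1$ we have $1-d\ge 2$, so the values $d_\infty(N)$ are spaced by $2(1-d)\ge 4$; the one of least modulus is $d_\infty(0)=d$, and the next is $d_\infty(1)=2-d$. Hence if $d_\infty\neq d$ then $|d_\infty|\ge 2-d>|d|$, and it suffices to derive a contradiction in that case for small $\delta$.

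First I would write the Pohozaev identity for the entire solution on a disk $D_R$, exactly as in the proof of Proposition~\ref{p:entire} but now retaining the potential boundary term, since $|u|\neq 1$ on $\partial D_R$:
\begin{align*}
2\int_{D_R}W(u)=T(R)-\mathcal R(R)+R\int_{\partial D_R}W(u)\,ds,
\end{align*}
where $T(R)=\int_0^{2\pi}\big[\tfrac{1-\delta}{2}|\partial_\theta u|^2+\delta(\partial_\theta u\cdot e_\theta)^2\big]\,d\theta$ is the tangential (angular) energy on the circle of radius $R$, and $\mathcal R(R)=R\int_{\partial D_R}\big[\tfrac{1+\delta}{2}(\partial_r u\cdot e_r)^2+\tfrac{1-\delta}{2}(\partial_r u\cdot e_\theta)^2\big]\,ds\ge 0$ is the radial boundary term. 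Because $|u(x)|\to 1$ uniformly (Proposition~\ref{p:entire}) and $u|_{\partial D_R}$ has degree $d_\infty$, writing $u=\rho e^{i\psi}$ and using $\tfrac{1-\delta}{2}+\delta\cos^2(\psi-\theta)\ge\tfrac{1-\delta}{2}$ together with the Cauchy--Schwarz bound $\int_0^{2\pi}(\partial_\theta\psi)^2\,d\theta\ge 2\pi d_\infty^2$, I obtain
\begin{align*}
T(R)\ge(1-\delta)\,\pi\,d_\infty^2\,(1-o(1))\ge(1-\delta)\,\pi\,(2-d)^2\,(1-o(1))\qquad(R\to\infty).
\end{align*}

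Next I would kill the two boundary terms along a sequence $R_k\to\infty$. The potential term is free: setting $g(r)=\int_{\partial D_r}W\,ds$ we have $\int_0^\infty g(r)\,dr=\int_{\R^2}W(u)<\infty$, whence $\liminf_{r\to\infty} r\,g(r)=0$, i.e. $\liminf_{R\to\infty}R\int_{\partial D_R}W\,ds=0$. For the radial term I would prove $\liminf_{r\to\infty}\mathcal R(r)=0$, which follows by the same mechanism once the total radial energy $\int_{\R^2}\big[\tfrac{1+\delta}{2}(\partial_r u\cdot e_r)^2+\tfrac{1-\delta}{2}(\partial_r u\cdot e_\theta)^2\big]\,dx$ is shown to be finite, i.e. once $\partial_r u$ decays fast enough at infinity (in the isotropic radial case $\partial_r v_d=O(r^{-3})$, cf. \eqref{glv}). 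Passing to such a subsequence in the Pohozaev identity gives
\begin{align*}
2\int_{\R^2}W(u)=\lim_{k\to\infty}T(R_k)\ge(1-\delta)\,\pi\,(2-d)^2,
\end{align*}
which together with $2\int_{\R^2}W(u)<\pi d^2$ forces $(1-\delta)(2-d)^2<d^2$, i.e. $\delta>1-\tfrac{d^2}{(2-d)^2}$. Hence for $\delta\le 1-\tfrac{d^2}{(2-d)^2}$ the case $d_\infty\neq d$ is excluded and $\deg(u;\partial D_r)=d$ for $r\gg 1$.

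The main obstacle is the control of the radial boundary term $\mathcal R(R)$: establishing finiteness of the total radial energy (equivalently, a decay rate for $\partial_r u$ at infinity) is the one step that is not purely formal and requires genuine input — presumably a clearing-out/monotonicity argument or a sharper asymptotic analysis at infinity, built on the local minimality and the uniform elliptic estimates of Proposition~\ref{p:entire}. I note that the threshold produced by discarding $\mathcal R$ along a subsequence, namely $1-\tfrac{d^2}{(2-d)^2}=\tfrac{4(1-d)}{(2-d)^2}$, is larger than the stated $\delta_0(d)$; I therefore expect that, to bypass the delicate decay estimate, one instead bounds $\mathcal R(R)$ directly by exploiting the minimizing property (comparing $u$ with a competitor that is radially flat near $\partial D_R$), a more robust route that loses constants — this is the likely source both of the more conservative denominator $4d^2+10-10d$ and of the auxiliary constraint $\delta<\tfrac{2}{\sqrt3}-1$.
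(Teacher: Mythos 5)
Your Pohozaev setup is correct (the identity, the lower bound $T(R)\geq(1-\delta)\pi d_\infty^2(1-o(1))$, and the disposal of the potential boundary term are all fine), but the step you defer --- showing $\liminf_{R\to\infty}\mathcal R(R)=0$, i.e.\ finiteness of the total radial energy $\int_{\R^2}|\partial_r u|^2$ or a decay rate for $\partial_r u$ --- is a genuine gap, and it is precisely the step that is out of reach here. For the isotropic equation such decay follows from the estimates of Brezis--Merle--Rivi\`ere, but, as the paper stresses just before Lemma~\ref{l:Elog}, those estimates ``rely very much on the isotropic structure of the equations and seem unapplicable here.'' In the anisotropic setting one does not even know a priori that $E(u,D_R)$ grows logarithmically; the paper has to prove this (Lemma~\ref{l:Elog}), only along a subsequence, only for $\delta<2/\sqrt3-1$, and only by exploiting the minimizing property through comparison maps and a differential inequality. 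So your contradiction (which, if it could be closed, would give the better threshold $4(1-d)/(2-d)^2$) rests on an asymptotic estimate that nobody knows how to prove for $\delta\neq 0$; it cannot be treated as a deferred technicality, and your proposed patch (bounding $\mathcal R(R)$ by comparing with a radially flat competitor near $\partial D_R$) is not carried out and is not what the paper does either.

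The paper's actual proof abandons the potential-energy/Pohozaev contradiction altogether and compares \emph{logarithmic growth rates of the full energy}, which is where both constants in $\delta_0(d)$ come from. Lemma~\ref{l:lower} (a Jerrard-type argument) gives $\liminf_R E(u,D_R)/(\pi\ln R)\geq(1-\delta)d_\infty^2$. Lemma~\ref{l:Elog} establishes $E(u,D_R)=O(\ln R)$ along a subsequence when $\delta<2/\sqrt3-1$, using harmonic-extension competitors, minimality, and the Pohozaev identity in the \emph{opposite} direction to yours (to control tangential by radial derivatives on circles); at no point is $\mathcal R(R)$ shown to vanish. Lemma~\ref{l:upper} (a Shafrir-type argument) then sharpens this to $\liminf_R E(u,D_R)/(\pi\ln R)\leq(1+3\delta)\bigl(d^2+|d_\infty-d|\bigr)$ by gluing onto $u|_{\partial D_{R_k}}$ an explicit equivariant multi-vortex competitor: one vortex of degree $d$ at the origin and $2(1-d)$ vortices of degree $\pm1$ on concentric circles (Lemma~\ref{l:construction}). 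Comparing the two rates forces $(1-\delta)d_\infty^2\leq(1+3\delta)\bigl(d^2+|d_\infty-d|\bigr)$, and minimizing the resulting expression over $d_\infty\in\bigl(d+2(1-d)\Z\bigr)\setminus\{d\}$ yields exactly the denominator $4d^2+10-10d$, while the constraint $2/\sqrt3-1$ is inherited from Lemma~\ref{l:Elog} --- not, as you conjectured, from a direct bound on the Pohozaev boundary term.
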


\begin{proof}[Proof of Proposition~\ref{p:deg}]
The proof is a direct combination of Lemma~\ref{l:lower}, Lemma~\ref{l:Elog} and Lemma~\ref{l:upper} below. Specifically Lemma~\ref{l:lower} provides a lower bound
\begin{align*}
\liminf_{R\to\infty}\frac{E(u,D_R)}{\pi\ln R}\geq (1-\delta)d_\infty^2,
\end{align*}
while Lemmas~\ref{l:Elog} and \ref{l:upper} provide, in two steps, the upper bound
\begin{align*}
\liminf_{R\to\infty}\frac{E(u,D_R)}{\pi\ln R}\leq (1+3\delta)(d^2+|d_\infty -d|),
\end{align*}
whenever $0\leq \delta <  2/\sqrt 3 -1$.
Therefore we must have
\begin{align*}
(1-\delta)d_\infty^2\leq (1+3\delta)(d^2+|d_\infty -d|),
\end{align*}
that is,
\begin{align*}
\delta\geq \frac{d_\infty^2-d^2-|d_\infty -d |}{3d^2+3|d_\infty-d|+d_\infty^2}.
\end{align*}
This implies $d_\infty=d$ as soon as 
\begin{align*}
\delta < \delta_*(d):=
\inf_{d_\infty\in \left(d +  2(1-d)\mathbb Z\right)\setminus\lbrace d\rbrace }
\frac{d_\infty^2-d^2-|d_\infty -d |}{3d^2+3|d_\infty-d|+d_\infty^2}.
\end{align*}
To compute $\delta_*(d)$ we let $d_\infty=d+x$ and consider the function
\begin{align*}
f(x)&=\frac{(d+x)^2-d^2-|x|}{3d^2+3|x|+(d+x)^2}=\frac{x^2 +2d x -|x|}{x^2 + 2d x + 3|x|+ 4d^2},
\end{align*}
whose derivative satisfies
\begin{align*}
(x^2 + 2d x + 3|x|+ 4d^2)^2f'(x)=
\begin{cases}
4(d+x)(2d^2-d+x) &\text{ if }x>0,\\
4(d+x)(2d^2 +d -x) &\text{ if }x<0.
\end{cases}
\end{align*}
In particular we have $f'(x)\leq 0$ for $x\leq -2(1-d)$ and $f'(x)\geq 0$ for $x\geq 2(1-d)$, so
\begin{align*}
\delta_*(d)&=\inf_{x\in 2(1-d)\mathbb Z\setminus \lbrace 0\rbrace}f(x)=\min(f(-2(1-d)),f(2(1-d))) \\
&=f(2(1-d))=\frac{2-2d}{4d^2+10-10d}.
\end{align*}
We conclude that $d_\infty=d$ whenever $\delta<\delta_0(d)$.
\end{proof}

The rest of this section is dedicated to the proof of Lemmas~~\ref{l:lower}, \ref{l:Elog} and \ref{l:upper}. The first is a classical lower bound simply using the fact that $u$ has degree $d_\infty$ at infinity and the energy is larger than $(1-\delta)$ times the isotropic Ginzburg-Landau energy.

\begin{lemma}\label{l:lower}
Let $\delta\in [0,1)$,  $u\colon \R^2\to\R^2$ be a smooth map and $R_0>1$ be such that $|u|\geq 1/2$ on $\partial D_r$ and $\deg(u;\partial D_r)=d_\infty$ for all $r>R_0$. Then 
\begin{align*}
E(u,D_R) \geq (1-\delta)\pi d_\infty^2\ln R + O(1)\quad\text{as }R\to\infty.
\end{align*}
\end{lemma}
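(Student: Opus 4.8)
The plan is to establish the logarithmic lower bound by comparing the anisotropic energy with the isotropic Ginzburg-Landau energy and invoking the classical degree-induced lower bound for the latter. Since $\delta\in[0,1)$, the pointwise inequality $\delta(\dv u)^2\geq 0$ gives immediately
\begin{align*}
E(u,D_R)=\int_{D_R}\left[\frac{1-\delta}{2}|\nabla u|^2 +\delta(\dv u)^2 + W(u)\right]\geq \int_{D_R}\frac{1-\delta}{2}|\nabla u|^2,
\end{align*}
so the anisotropic energy dominates $(1-\delta)$ times the Dirichlet energy. It therefore suffices to prove the lower bound $\frac12\int_{D_R}|\nabla u|^2\geq \pi d_\infty^2\ln R + O(1)$ for a map whose degree on each circle $\partial D_r$ (for $r>R_0$) equals $d_\infty$ and which satisfies $|u|\geq 1/2$ there.

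First I would reduce to an annular estimate. For $R_0 < r < R$, the contribution of the annulus $A_{R_0,R}=D_R\setminus D_{R_0}$ to the Dirichlet energy is bounded below by the energy cost of carrying degree $d_\infty$ across the annulus. Concretely, on each circle $\partial D_r$ with $|u|\geq 1/2$, I would write the standard lower bound for the tangential part of the gradient: using a local lifting $u/|u|=e^{i\psi}$ on $\partial D_r$ (legitimate since $u$ does not vanish there), one has $\int_0^{2\pi}|\partial_\theta\psi|^2\,d\theta\geq \frac{1}{2\pi}\bigl(\int_0^{2\pi}\partial_\theta\psi\,d\theta\bigr)^2 = 2\pi d_\infty^2$ by Cauchy-Schwarz, since the phase winds $d_\infty$ times. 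Translating this into a bound on $|\partial_\theta u|$ on $\partial D_r$ and accounting for the factor $1/|u|^2$ coming from $|\nabla(u/|u|)|\leq |\nabla u|/|u|$ — which is where the hypothesis $|u|\geq 1/2$ enters, though it only costs a multiplicative constant that can be absorbed, and more carefully one shows the extra terms are $O(1)$ — I would get
\begin{align*}
\frac12\int_{\partial D_r}|\partial_\theta u|^2\,\frac{d\theta}{r}\gtrsim \frac{\pi d_\infty^2}{r} + (\text{lower order}).
\end{align*}
Integrating in $r$ from $R_0$ to $R$ then produces the leading term $\pi d_\infty^2\ln R$, with the discarded radial part $\int |\partial_r u|^2$ only helping, and all boundary/small-$|u|$ corrections collected into the $O(1)$ term.

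The cleanest route, which I would actually adopt to avoid technicalities with the $1/|u|^2$ factor, is to use the well-known lower bound for Ginzburg-Landau energy on annuli directly: for a map $u$ with $|u|\geq 1/2$ and degree $d_\infty$ on $\partial D_r$ for all $r\in(R_0,R)$, one has $\frac12\int_{A_{R_0,R}}|\nabla u|^2\geq \pi d_\infty^2\ln(R/R_0)+O(1)$ (see \cite{bethuel1}). Multiplying by $(1-\delta)$ and noting the energy in $D_{R_0}$ is a fixed finite quantity absorbed into $O(1)$ yields the claim. The main obstacle is purely bookkeeping: ensuring the lower-order contributions — the behavior of $|u|$ near $1/2$, the modulus gradient $|\nabla|u||$, and the interior disk $D_{R_0}$ — are genuinely $O(1)$ uniformly in $R$, rather than secretly growing; this follows because the logarithmic divergence is entirely governed by the tangential winding, while the radial and modulus contributions give nonnegative or bounded pieces.
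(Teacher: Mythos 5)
There is a genuine gap, and it sits exactly at the step you dismiss as bookkeeping. Your reduction discards the potential term $W(u)$ and reduces the lemma to the claim that $\frac12\int_{D_R}|\nabla u|^2\geq \pi d_\infty^2\ln R+O(1)$ for any map with $|u|\geq 1/2$ and degree $d_\infty$ on every circle $\partial D_r$, $r>R_0$. That claim is false: the map $u=\frac12 e^{id_\infty\theta}$ (smoothed inside $D_{R_0}$) satisfies both hypotheses, yet its Dirichlet energy on $D_R\setminus D_{R_0}$ equals $\frac{\pi d_\infty^2}{4}\ln (R/R_0)$, a quarter of what you need. With only $|u|\geq 1/2$ one gets $|\partial_\theta u|^2\geq \rho^2|\partial_\theta v|^2\geq\frac14|\partial_\theta v|^2$, i.e.\ a \emph{multiplicative} loss on the leading logarithmic term; contrary to what you write, such a loss cannot be ``absorbed'' into $O(1)$, and the sharp constant $(1-\delta)\pi d_\infty^2$ is precisely what Proposition~\ref{p:deg} requires --- with a factor $\frac14$ on the left of $(1-\delta)d_\infty^2\leq(1+3\delta)(d^2+|d_\infty-d|)$, one can no longer exclude, e.g., $d_\infty=3$ for $d=-1$ even at $\delta=0$, since $\frac94<5$. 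The same objection defeats your ``cleanest route'': the classical annulus lower bounds with sharp constant (as in \cite{bethuel1}) require either $|u|$ close to $1$ (the constant degrades with the modulus lower bound) or an essential use of the potential term; under the bare hypothesis $|u|\geq 1/2$ and for the Dirichlet energy alone, no such sharp bound exists. Note also that the lemma assumes nothing like finite potential energy, so there is no indirect way to argue that $|u|$ is close to $1$ on most circles.

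The missing idea --- and what the paper's proof, following Jerrard \cite{jerrard99}, actually does --- is to keep $W(u)$ and the tangential derivative of the modulus and use them to compensate the modulus deficit quantitatively on each circle. Writing $\rho=|u|$, $h=\max(1-\rho,0)$, one has $\rho^2\geq 1-2h$, so the circle integral is at least $\frac1r(1-\delta)\pi d_\infty^2\bigl(1-2h_{max}(r)\bigr)$ plus the term $\int_{\partial D_r}\bigl(\frac{1-\delta}{2r^2}|\partial_\theta h|^2+\frac14 h^2\bigr)$; Morrey's inequality on the circle shows this last term dominates $c\sqrt{1-\delta}\,h_{max}(r)^2$, and optimizing over $h_{max}$ converts the deficit into an error $O(d_\infty^4/r^2)$, which is integrable in $r$ and hence genuinely $O(1)$ after integration from $R_0$ to $R$. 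Without playing the potential against the deficit in this way, the deficit can eat a fixed fraction of the leading term on every circle --- which is exactly what the counterexample above exhibits --- so your argument cannot be repaired by ``more careful'' bookkeeping alone; it needs this additional mechanism.
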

\begin{proof}[Proof of Lemma~\ref{l:lower}]
We follow Jerrard's argument \cite{jerrard99} (see also \cite{sandier98}). For $|x| > R_0$ we may
write $u=\rho v$ with $\rho=|u|$ and $v=u/|u|$, and deduce
\begin{align*}
\frac{1-\delta}{2} |\nabla  u|^2 +\delta (\dv u)^2  +W(u) & \geq \frac{1-\delta}{2r^2}|\partial_\theta u|^2 + W(u)\\
&\geq \frac{1-\delta}{2r^2}\rho^2|\partial_\theta v|^2 +\frac{1-\delta}{2r^2}|\partial_\theta\rho|^2  +\frac 14 (1-\rho)^2.
\end{align*}
Letting $h=\max(1-\rho,0)$, we have $\rho^2\geq 1-2h$ and the
 above implies
\begin{align*}
\frac{1-\delta}{2} |\nabla  u|^2 +\delta (\dv u)^2  +W(u) 
&\geq (1-2h)\frac{1-\delta}{2r^2}|\partial_\theta v|^2 +\frac{1-\delta}{2r^2}|\partial_\theta h |^2  +\frac 14 h^2.
\end{align*}
Integrating on $\partial D_r$ for some $r>R_0$ this implies, since $\deg(v)=d_\infty$,
\begin{align}
&\int_{\partial D_r}\left(\frac{1-\delta}{2} |\nabla  u|^2 +\delta (\dv u)^2  +W(u) \right)\nonumber\\
&\geq \frac 1r (1-\delta)\pi d_\infty^2 (1-2h_{max} (r) ) + \int_{\partial D_r} \left(\frac{1-\delta}{2r^2}|\partial_\theta h|^2  +\frac 14 h^2\right),\label{eq:lowerbound1}
\end{align}
where $h_{max}(r)=\max_{\partial D_r}h=h(x_{max})$ for some $x_{max}\in\partial D_r$. To bound the last term from below we set
\begin{align*}
\gamma =\frac 1{r^2} \int_{\partial D_r}|\partial_\theta h|^2,
\end{align*}
so that by Morrey's inequality we have
\begin{align*}
h(x)\geq \frac{h_{max}}{2}\quad\forall x\in\partial D_r\text{ with }|x-x_{max}|\leq c\frac{h_{max}^2}{\gamma},
\end{align*}
for some absolute constant $c>0$. We deduce
\begin{align*}
\int_{\partial D_r} \left(\frac{1-\delta}{2r^2}|\partial_\theta h|^2  +\frac 14 h^2\right)
&\geq \frac{1-\delta}{2}\gamma + c\min \left(r,\frac{h_{max}^2}{\gamma}\right)h_{max}^2,
\end{align*}
for a possibly different absolute constant $c>0$.
Since $r\geq R_0\geq 1$, we deduce
\begin{align*}
\int_{\partial D_r} \left(\frac{1-\delta}{2r^2}|\partial_\theta h|^2  +\frac 14 h^2\right)
&\geq \min\left(c\, h_{max}^2,\inf_{\gamma\geq 0}\left\lbrace \frac{1-\delta}{2}\gamma + c\frac{h_{max}^4}{\gamma}\right\rbrace\right) \\
&\geq c \sqrt{1-\delta}h_{max}^2,
\end{align*}
again for a possibly different absolute constant $c>0$.
Coming back to \eqref{eq:lowerbound1} we deduce
\begin{align*}
&\int_{\partial D_r}\left(\frac{1-\delta}{2} |\nabla  u|^2 +\delta (\dv u)^2  +W(u) \right)\\
&\geq \frac 1r (1-\delta)\pi d_\infty^2 - 2(1-\delta)\pi d_\infty^2 \frac{h_{max}(r)}{r} + c \sqrt{1-\delta}h_{max}(r)^2\\
&\geq \frac 1r (1-\delta)\pi d_\infty^2
+\inf_{h\geq 0}\left\lbrace c \sqrt{1-\delta}h^2
- 2(1-\delta)\pi d_\infty^2 \frac{h}{r}  
\right\rbrace \\
&\geq \frac 1r (1-\delta)\pi d_\infty^2
- c  \frac{d_\infty^4}{r^2},
\end{align*}
for a possibly different absolute constant $c>0$.
Integrating over $r\in [R_0,R]$ implies
\begin{align*}
E(u,D_R\setminus D_{R_0})\geq (1-\delta)\pi d_\infty^2 \ln\frac{R}{R_0} - c \frac{d_\infty^4}{R_0}\quad\text{for all } R\geq R_0,
\end{align*}
and proves the lower bound.
\end{proof}

Lemmas~\ref{l:Elog} and \ref{l:upper} are dedicated to obtaining a similar upper bound on the energy of $u$. The first step is to show that the energy cannot increase faster than logarithmically. In the isotropic case $\delta=0$, such logarithmic energy bound is valid for any solution (not necessarily minimizing) with finite potential energy, as follows from the estimates in \cite{bmr}, which however rely very much on the isotropic structure of the equations and seem unapplicable here. Here we show logarithmic energy growth provided $\delta$ is not too large, by constructing appropriate comparison maps and using the minimizing property of our solutions.

\begin{lemma}\label{l:Elog}
Let $d=-1,-2,\ldots $, $\delta\in [0,1)$  and $u$ be a $\mu_d^\pm$-equivariant solution of the anisotropic Ginzburg-Landau equation \eqref{eq:glapos} described in Proposition \ref{p:entire}.
Then
\begin{align*}
\liminf_{R\to\infty} \frac{E(u,D_R)}{\ln R} < \infty,\text{ provided }0\leq\delta <  \frac{2}{\sqrt 3}-1.
\end{align*}
\end{lemma}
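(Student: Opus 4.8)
The plan is to prove the logarithmic bound by a scale‑by‑scale comparison argument, exploiting the local minimality from Proposition~\ref{p:entire}. First I would reduce the statement to a uniform annular estimate along a geometric sequence of radii: if I can find $\rho_0<\rho_1<\cdots$ with $\rho_{k+1}\in[2\rho_k,4\rho_k]$ and
$E(u,D_{\rho_{k+1}})-E(u,D_{\rho_k})\le C$
with $C$ independent of $k$, then $E(u,D_{\rho_N})\le E(u,D_{\rho_0})+CN$ while $\ln\rho_N\ge N\ln 2$, which gives $\liminf_R E(u,D_R)/\ln R<\infty$. Since $u$ is locally minimizing, to bound the annular energy it suffices to exhibit a $\mu_d^\pm$‑equivariant competitor that agrees with $u$ on the two bounding circles $\partial D_{\rho_k}$, $\partial D_{\rho_{k+1}}$ and whose energy on the annulus is at most $C$: taking the variation $\xi$ equal to (competitor $-\,u$), supported in the closed annulus and hence in $H^1_0(D_{\rho_{k+1}})$, minimality yields $E(u,\text{annulus})\le E(\text{competitor},\text{annulus})$.

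For the competitor I would use the structure of $u$ at infinity. Since $2\int_{\R^2}W(u)<\infty$ and $|u|\to1$, there is $R_0$ with $|u|\ge 1/2$ outside $D_{R_0}$, so on each circle $\partial D_r$ ($r\ge R_0$) I can lift $u/|u|=e^{i\psi(r,\cdot)}$ and write $\psi=d_\infty\theta+\varphi(r,\theta)$ with $\varphi$ $2\pi$‑periodic and $d_\infty=\deg(u;\partial D_r)$. On the annulus I take the competitor $\mathbb{S}^1$‑valued, equal to $e^{i(d_\infty\theta+\Phi)}$ with $\Phi$ the logarithmic interpolation between $\varphi(\rho_k,\cdot)$ and $\varphi(\rho_{k+1},\cdot)$ (to keep the radial derivative as small as possible), plus a width‑one layer near each circle in which the modulus is brought from $1$ to $|u|$. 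Its energy splits into a vortex baseline $\sim\pi d_\infty^2\ln(\rho_{k+1}/\rho_k)$; a tangential correction governed by $\int_{\partial D_{\rho_k}}|\partial_\theta u|^2$ and $\int_{\partial D_{\rho_{k+1}}}|\partial_\theta u|^2$; a radial correction governed by the phase drift $\|\varphi(\rho_{k+1},\cdot)-\varphi(\rho_k,\cdot)\|_{L^2}^2$; and an $O(1)$ contribution from the modulus layers. Using $\delta(\dv\cdot)^2\le 2\delta|\nabla\cdot|^2$ I would keep every contribution within a factor $(1+3\delta)$ of its isotropic counterpart, as in the sharp bound of Lemma~\ref{l:upper}.

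Closing the estimate uniformly is the heart of the matter. A priori neither the tangential energies on circles nor the phase drift are controlled, and bounding the competitor's radial correction naively by $u$'s own radial energy reintroduces the full annular energy with an unfavorable constant. I would proceed in two steps. First, select $\rho_k,\rho_{k+1}$ to be \emph{good circles}, on which the tangential energy is comparable to its average over the dyadic block $[2\rho_k,4\rho_k]$; such radii exist by a mean‑value/Chebyshev argument, making the tangential correction bounded. Second, play the competitor's radial correction against the radial energy that $u$ genuinely carries: writing both the competitor energy and $E(u,\text{annulus})$ in the variables $(\partial_r\varphi,\partial_\theta\varphi/r)$ and completing the square in the anisotropic quadratic form, the correction can be \emph{absorbed} into the lower bound for $E(u,\text{annulus})$ provided that form stays coercive, which is exactly the condition $3(1+\delta)^2<4$, that is $\delta<2/\sqrt3-1$. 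This produces the desired constant bound on each annulus.

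The step I expect to be the main obstacle is precisely this absorption/coercivity argument. The logarithmic baseline is classical and the modulus layers are routine, but controlling the phase‑matching cost uniformly in $R$ is delicate because the tangential and radial energies are coupled through the anisotropic divergence term and are not separately controlled. It is here that both ingredients are indispensable: the minimizing property, which lets one replace $u$ by the cheaper interpolation, and the smallness $\delta<2/\sqrt3-1$, which keeps the relevant quadratic form positive so that the coupling is subcritical. Without either, the comparison does not close and one cannot exclude faster‑than‑logarithmic energy growth.
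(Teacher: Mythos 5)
Your reduction to uniformly bounded dyadic increments is where the argument breaks, and the failure is quantitative, not merely technical. An annular competitor must match $u$ on \emph{two} circles, so besides the $O(1)$ vortex baseline it must pay a tangential correction and the radial phase-matching cost. For the latter, with the logarithmic interpolation $\Phi=\varphi_k+\tfrac{\ln(r/\rho_k)}{\ln(\rho_{k+1}/\rho_k)}(\varphi_{k+1}-\varphi_k)$, the competitor's radial cost is
\begin{align*}
\frac{1+3\delta}{2}\,\frac{\|\varphi_{k+1}-\varphi_k\|_{L^2(d\theta)}^2}{\ln(\rho_{k+1}/\rho_k)}
\end{align*}
in the upper bound you need (the term $\delta(\dv\tilde u)^2\geq 0$ cannot be discarded, whence the factor $1+3\delta$), while the only control on the drift is Cauchy--Schwarz,
\begin{align*}
\|\varphi_{k+1}-\varphi_k\|_{L^2(d\theta)}^2\leq \ln(\rho_{k+1}/\rho_k)\int_{A}|\partial_r\varphi|^2\,r\,dr\,d\theta ,
\end{align*}
which is \emph{sharp} (equality when $\partial_r\varphi\propto 1/r$). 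Since $u$'s radial contribution to $E(u,A)$ carries the weight $\tfrac{1-\delta}{2}$, the transfer constant from $u$'s own radial energy to the competitor's radial cost is, in the worst case, $\tfrac{1+3\delta}{1-\delta}(1+o(1))\geq 1$ for every $\delta\geq 0$: the quadratic form you want to be coercive is degenerate already at $\delta=0$ and indefinite for $\delta>0$, so no completion of squares yields absorption under $\delta<2/\sqrt3-1$. (Your condition $3(1+\delta)^2<4$ is indeed algebraically equivalent to the paper's $(1+\delta)(1+3\delta)<2(1-\delta)$, but it is asserted rather than derived, and the computation above shows it cannot come out of this scheme.) Moreover, even granting absorption, the tangential correction is of size $f(\rho_k)+f(\rho_{k+1})$ with $f(r)=r\,E(u,\partial D_r)$, and your Chebyshev selection only gives $f(\rho_k)\lesssim E(u,A_k)$ where $A_k$ is the dyadic block — a quantity you are trying to bound, not an $O(1)$ constant. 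The resulting recursion, increment $\leq O(1)+C\cdot(\text{neighboring dyadic energies})$ with $C$ of order one, cannot be closed from the only available a priori information (energy growth linear in $R$, i.e.\ exponential in the dyadic index), so the claimed uniform increment bound never materializes.

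The paper's proof shows exactly what is needed to escape this circularity: compare on the \emph{whole disk} $D_R$, not on annuli. Its competitor $\tilde u=\alpha\min(r,1-h)e^{i(d_\infty\theta+\psi)}$, with $h,\psi$ the harmonic extensions of $1-\rho$ and $\varphi$ from $\partial D_R$, has only one matching circle, so no phase drift is ever paid; and the harmonic extension obeys $\int_{D_R}|\nabla\psi|^2=2\pi\sum|n|\,|c_n|^2\leq R\int_{\partial D_R}(\partial_\tau\varphi)^2$ (with $c_n$ the Fourier coefficients of the boundary phase) — constant one, no logarithmic loss, and bulk cost controlled by \emph{circle} data alone. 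Combining this with the Pohozaev identity of Section~\ref{s:poho} — which you never invoke, and which is what bounds the tangential circle energy by $f(R)$ up to controlled terms — minimality yields a differential inequality of the form $F(R)\leq c_*(R)\,R\,F'(R)+O(\ln R)$, where $F$ is essentially $E(u,D_R)$ and $c_*\to\tfrac{(1+\delta)(1+3\delta)}{2(1-\delta)}<1$ exactly when $\delta<2/\sqrt3-1$. This is then closed not by telescoping increments but by playing the inequality against the a priori linear bound $E(u,D_R)=O(R)$, obtained by multiplying \eqref{eq:glapos} by $u$ — an ingredient absent from your proposal and indispensable in the paper. If you wish to salvage a dyadic scheme, the competitor's cost on each annulus must be controlled by boundary-circle data alone; that is precisely what the one-sided harmonic extension accomplishes and what two-sided matching forbids.
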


\begin{proof}[Proof of Lemma~\ref{l:Elog}]
Let
\begin{align*}
f(R)&=R \, E(u,\partial D_R),&
g(R)&=R\int_{\partial D_R} (1-|u|^2)^2,&
\sigma(R)&=\sup_{\partial D_R} \left| 1-|u| \right|.
\end{align*}
There exists $R_0\geq 2$ such that for $R\geq R_0$ we have $\sigma(R)\leq 1/2$ and may write
\begin{align*}
u=\alpha\,\rho\, e^{i(d_\infty\theta +\varphi)}\quad\text{in }\R^2\setminus D_{R_0}
\end{align*}
for some real-valued smooth functions $\rho=|u|$, $\varphi$ in $\R^2\setminus D_{R_0}$, 
 and
\begin{align*}
\alpha=\begin{cases}
1 &\text{ if  }u\text{ is }\mu_d^+\text{-equivariant},\\
i &\text{ if  }u\text{ is }\mu_d^-\text{-equivariant},
\end{cases}
\end{align*}
and $d_\infty= d$ modulo $2(1-d)$. The $\mu_d^\pm$-equivariance of $u$ translates into
\begin{align*}
&\rho\left(e^{i\frac{\pi}{1-d}}z\right)=\rho(\bar z)=\rho(z),\quad \varphi\left(e^{i\frac{\pi}{1-d}}z\right)=\varphi(\bar z)=\varphi(z).
\end{align*}
For   all $R>R_0$ we have, with the notation $\partial_\tau =e_\theta\cdot\nabla=r^{-1}\partial_\theta$,
\begin{align*}
& R \int_{\partial D_R} (\partial_\tau\rho)^2 +\left(\frac{d_\infty}{R} + \partial_\tau\varphi\right)^2 
\leq \frac{1}{(1-\sigma(R))^2} R\int_{\partial D_R}|\partial_\tau u|^2,
\end{align*}
hence, for any $\eta\in (0,1)$, using $2R^{-1}d_\infty\partial_\tau\varphi\leq \eta^{-1}R^{-2}d_\infty^2 + \eta\,(\partial_\tau\varphi)^2$,
\begin{align*}
& R \int_{\partial D_R} (\partial_\tau\rho)^2 +(1-\eta)\left( \partial_\tau\varphi\right)^2 
\leq \frac{1}{(1-\sigma(R))^2} R\int_{\partial D_R}|\partial_\tau u|^2
+2\pi\left(1+\frac 1\eta\right)d_\infty^2. 
\end{align*}
From Pohozaev identity (see Section~\ref{s:poho})
\begin{align*}
R\int_{\partial D_R}  \frac{1+\delta}{2}(\partial_\tau u\cdot e_\theta)^2 +
\frac{1-\delta}{2}(\partial_\tau u \cdot e_r)^2  
&= R\int_{\partial D_R}
\frac{1+\delta}{2}(\partial_r u\cdot e_r)^2 
+
\frac{1-\delta}{2}(\partial_r u \cdot e_\theta)^2
\\
& \quad +
2\int_{D_R} W(u) - R\int_{\partial D_R} W(u), 
\end{align*}
we deduce
\begin{align*}
R\int_{\partial D_R} |\partial_\tau u|^2 &\leq
\frac{1+\delta}{1- \delta} {R}\int_{\partial D_R} |\partial_r u|^2
 +\frac{2}{1-\delta} \int_{\mathbb R^2} W(u),
\end{align*}
and therefore
\begin{align}\label{eq:estimrhovarphifR}
& (1-\eta)(1-\sigma(R))^2 R \int_{\partial D_R} (\partial_\tau\rho)^2 +(\partial_\tau\varphi)^2 
\\
& \leq \frac{1+\delta}{2} R\int_{\partial D_R}|\partial_\tau u|^2 
+ \frac{1-\delta}{2}\left( \frac{1+\delta}{1- \delta} R\int_{\partial D_R} |\partial_r u|^2
 +\frac{2}{1-\delta} \int_{\mathbb R^2} W(u)\right)
 +\frac {4\pi}\eta d_\infty^2
 \nonumber\\
& = \frac{1+\delta}{2} R\int_{\partial D_R}|\nabla u|^2 +\int_{\R^2} W(u)
+\frac {4\pi}\eta d_\infty^2\nonumber\\
&\leq \frac{1+\delta}{1-\delta} f(R) +\int_{\R^2} W(u)
+\frac {4\pi}\eta d_\infty^2 .\nonumber
\end{align}
We define real-valued functions $\psi,h$ in $D_R$ solving
\begin{align*}
&\Delta\psi=0\text{ in }D_R,\quad\psi=\varphi\text{ on }\partial D_R,\\
&\Delta h= 0\text{ in }D_R,\quad h=1-\rho\text{ on }\partial D_R.
\end{align*}
From Poisson's formula
\begin{align*}
\psi(re^{i\theta})=\sum_{n\in\mathbb Z} c_n \left(\frac rR\right)^{|n|} e^{in\theta},\quad c_n=\frac{1}{2\pi}\int_0^{2\pi}\varphi(Re^{i\theta})e^{-in\theta}\, d\theta,
\end{align*}
we deduce
\begin{align*}
\int_{D_R}|\nabla \psi|^2 &
%=\int_0^R \left( \int_0^{2\pi} |\partial_r\psi|^2\,d\theta +
%\frac{1}{r^2}
%\int_0^{2\pi}\frac{|\partial_\theta\psi|^2}{r^2}\,d\theta
%\right)
%\, r\, dr \\
%&=2\pi\sum_{n\neq 0} \frac{2n^2|c_n|^2}{R^{2|n|}}\int_0^R r^{2|n|-2}\, r\, dr 
=2\pi\sum |n|\, |c_n|^2 \\
&
\leq 
2\pi\sum n^2 |c_n|^2
= R\int_{\partial D_R}(\partial_\tau\varphi)^2,
\end{align*}
and similarly
\begin{align*}
\int_{D_R}|\nabla h|^2 &\leq R\int_{\partial D_R} (\partial_\tau\rho)^2,
\\
\int_{D_R}h^2   &\leq \frac R 2 \int_{\partial D_R} (1-\rho )^2\leq 2 g(R).
\end{align*}
Moreover thanks to the maximum principle we have $|h|\leq \sigma(R)$ in $D_R$. And by uniqueness, the harmonic extensions $h,\psi$ have the same equivariance as $\rho,\varphi$.
Therefore we obtain a $\mu_d^\pm$-equivariant map $\tilde u$ in $D_R$ agreeing with $u$ on $\partial D_R$ by setting
\begin{align*}
\tilde u =\alpha\,\min(r,1-h)\, e^{i(d_\infty\theta +\psi)}\quad\text{in }D_R.
\end{align*}
Denoting by $c$ a generic absolute positive constant (whose value may change from line to line) we have
\begin{align*}
\int_{D_R}|\nabla\tilde u|^2 &\leq \frac{c}{\eta} d_\infty^2 \ln R +\int_{D_R} |\nabla h|^2 + (1+\sigma(R))^2(1+\eta) \int_{D_R} |\nabla\psi|^2\\
&\leq \frac{c}{\eta} d_\infty^2 \ln R +(1+\sigma(R))^2(1+\eta) R\int_{\partial D_R} (\partial_\tau\rho)^2 +(\partial_\tau\varphi)^2\\
&\leq \frac{c}{\eta} d_\infty^2 \ln R 
+\frac{(1+\sigma(R))^2}{(1-\sigma(R))^2}\frac{1+\eta}{1-\eta}\frac{1+\delta}{1-\delta}f(R)
+\int_{\R^2} W(u),
\end{align*}
where we used \eqref{eq:estimrhovarphifR} for the last inequality.
Moreover we have
\begin{align*}
\int_{D_R} W(\tilde u) \leq c + c\int_{D_R}h^2 \leq c +c\, g(R),
\end{align*}
so we deduce %{\color{red}[CHECK CONSTANT]}
\begin{align*}
E(u;D_R) &\leq
E(\tilde u;D_R)\leq \frac{1+3\delta}{2}\int_{D_R}|\nabla\tilde u|^2 + \int_{D_R} W(\tilde u)
\\
&\leq \frac{1}{2}\frac{(1+\sigma(R))^2}{(1-\sigma(R))^2}\frac{1+\eta}{1-\eta}\frac{(1+\delta)(1+3\delta)}{1-\delta}f(R)  +\frac{c}{\eta} d_\infty^2 \ln R +\int_{\R^2} W(u)+ c g(R),
\end{align*}
for any $R>R_0$ and $\eta>0$. Setting
\begin{align*}
F(R)=E(u;D_R) +2c \int_{D_R}W(u) =\int_0^R \frac{f(r)+2c g(r)}{r}\, dr,
\end{align*}
we obtain
\begin{align}\label{eq:diffineq}
F(R)&\leq 
c_*(R)\, R\, F'(R) +\frac{c}{\eta} d_\infty^2\ln R + 3c\int_{\R^2} W(u), \\
c_*(R)& =
\frac{1}{2}\frac{(1+\sigma(R))^2}{(1-\sigma(R))^2}\frac{1+\eta}{1-\eta}\frac{(1+\delta)(1+3\delta)}{1-\delta}.\nonumber
\end{align}
Multiplying by $u$ the equation
\begin{align*}
(1-\delta)\Delta u + 2\delta \nabla (\dv u) +(1-|u|^2)u=0,
\end{align*}
and using that $u,\nabla u\in L^\infty$ and $1-|u|^2\in L^2$ we infer that $R\mapsto E(u;B_R)$ grows at most linearly, and therefore
\begin{align}\label{eq:Flin}
\limsup_{R\to\infty}\frac{F(R)}{R}<\infty.
\end{align}
We claim that this linear growth estimate and the differential inequality \eqref{eq:diffineq} imply that
\begin{align}\label{eq:Flog}
\liminf_{R\to\infty}\frac{F(R)}{\ln R} <\infty,\quad\text{provided }\frac{(1+\delta)(1+3\delta)}{1-\delta}<2.
\end{align}
Assume  by contradiction  that $\liminf  F(R)/\ln R =\infty$ and that $(1+\delta)(1+3\delta)/(1-\delta)<2$. Thanks to the latter we can choose $\eta,R_1>0$ so that $c_*(R)<1$ for all $R>R_1$. And since $\liminf_{R\to\infty} F(R)/\ln R =\infty$ we  may further increase $R_1$ and deduce from \eqref{eq:diffineq} the inequality
\begin{align*}
F(R)\leq \hat c RF'(R)\qquad\text{for some }\hat c\in (0,1)\text{ and all }R>R_1,
\end{align*}
But this implies that the function $R\mapsto F(R)/R^{1/\hat c}$ is ultimately nondecreasing and thereby contradicts \eqref{eq:Flin} because $1/\hat c>1$. The conclusion follows from \eqref{eq:Flog} since
 $E(u;B_R)\leq F(R)$.
\end{proof} 

Thanks to Lemma~\ref{l:Elog} we now have a logarithmic bound on the energy, and we may  perform a sharper construction to obtain a more precise upper bound.

\begin{lemma}\label{l:upper}
Let $d=-1,-2,\ldots $ and $u\colon \R^2\to\R^2$ be a $\mu_d^\pm$-equivariant map which locally minimizes the energy $E(u,D_R)$ as described in Proposition \eqref{eq:glapose} and such that
\begin{align*}
\liminf_{R\to\infty} \frac{E(u,D_R)}{\ln R} < \infty.
\end{align*}
Then
\begin{align*}
\liminf_{R\to\infty} \frac{E(u,D_R)}{\ln R} \leq (1+3\delta)\pi \left(d^2 + |d_\infty - d| \right), 
\end{align*}
where $d_\infty=\deg(u;\partial D_r)$ for large enough $r>0$.
\end{lemma}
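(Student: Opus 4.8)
The plan is to exploit the local minimality of $u$: for a well-chosen radius $R'$ I will construct a $\mu_d^\pm$-equivariant competitor $\tilde u\in H^1(D_{R'};\R^2)$ agreeing with $u$ on $\partial D_{R'}$ and satisfying $\frac12\int_{D_{R'}}|\nabla\tilde u|^2\le \pi(d^2+|d_\infty-d|)\ln R'+O(1)$ together with $\int_{D_{R'}}W(\tilde u)=O(1)$. Since $(\dv v)^2\le 2|\nabla v|^2$ pointwise, the energy density obeys $\frac{1-\delta}2|\nabla v|^2+\delta(\dv v)^2+W(v)\le \frac{1+3\delta}2|\nabla v|^2+W(v)$, which is exactly where the constant $1+3\delta$ enters; hence $E(u,D_{R'})\le E(\tilde u,D_{R'})\le (1+3\delta)\pi(d^2+|d_\infty-d|)\ln R'+O(1)$, and dividing by $\ln R'$ and letting $R'\to\infty$ along the chosen radii will give the claim.

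For the competitor, write $n=1-d$, $m=|d_\infty-d|$ and $s=\sgn(d_\infty-d)$, and recall that the degree constraint forces $d_\infty-d\in 2n\Z$, so $m=2n|k|$ with $k=(d_\infty-d)/(2n)$. I would place a vortex of degree $d$ at the origin (the minimal degree compatible with the equivariance, cf. Remark~\ref{r:rad}) and $m$ vortices of degree $s$ arranged in $|k|$ rings of $2n$ points each, the rings sitting at radii comparable to $R'$ (say in $[R'/8,R'/4]$) and chosen invariant under the rotation by $\pi/n$ and under reflection; one checks that a reflection reverses orientation both in the source and, through $\mu_d^\pm$, in the target, so all vortices of one orbit carry the same degree $s$, and the resulting phase $\Phi=d\theta+s\sum_j\arg(z-p_j)$ is genuinely $\mu_d^\pm$-equivariant with total winding $d+sm=d_\infty$. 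The corresponding canonical harmonic map has Dirichlet energy $\pi\big(d^2+m\big)\ln R'+O(1)$: this is the standard Ginzburg--Landau vortex expansion (equivalently, rescale $D_{R'}$ to $D_1$ so the $m+1$ singularities become well-separated points with cores of size $1/R'$). The whole point of spreading the surplus degree $d_\infty-d$ into $m$ unit vortices at scale $R'$, rather than lumping it at the origin, is that the quadratic cost $d_\infty^2$ is replaced by $d^2+m=d^2+|d_\infty-d|$.

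It remains to glue this bulk map to $u$ across a thin transition annulus without spending more than $O(1)$ of energy, and this is where the hypothesis $\liminf E(u,D_R)/\ln R<\infty$ is used. Since $E(u,D_R)=\int_0^R r^{-1}f(r)\,dr$ with $f(r)=r\,E(u,\partial D_r)$, a mean-value argument on the geometric range $[\sqrt R,R]$ produces, along a sequence $R\to\infty$, a radius $R'\in[\sqrt R,R]$ with $f(R')=O(1)$; equivalently $\int_0^{2\pi}|\partial_\theta u(R'e^{i\theta})|^2\,d\theta=O(1)$ and $R'\int_{\partial D_{R'}}W(u)=O(1)$, so on $\partial D_{R'}$ the modulus $\rho=|u|$ tends to $1$ and the phase $\varphi$ in the representation $u=\alpha\rho e^{i(d_\infty\theta+\varphi)}$ has bounded $H^1(\mathbb S^1)$ norm. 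On $\partial D_{R'/2}$ the bulk phase $\Phi$ likewise has $\int|\partial_\theta\Phi|^2\,d\theta=O(1)$, since the vortices, viewed from radius $R'/2$, contribute uniformly bounded tangential derivatives. As both boundary traces have winding $d_\infty$ and near-unit modulus, the phase difference and the modulus defect are single-valued with $O(1)$ angular energy, and interpolating them radially across $D_{R'}\setminus D_{R'/2}$ (an annulus of ratio $2$) costs only $O(1)$ in energy; because the interpolation uses a radial cutoff and equivariant data, $\tilde u$ stays $\mu_d^\pm$-equivariant.

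The main obstacle I anticipate is not the energy bookkeeping but maintaining exact $\mu_d^\pm$-equivariance throughout the construction while simultaneously (i) realizing the surplus degree by genuine group orbits of unit vortices --- which is what forces the count $m=|d_\infty-d|$ via $d_\infty-d\in 2n\Z$ --- and (ii) matching $u$ on $\partial D_{R'}$. The radius selection resolves (ii) and the orbit structure resolves (i), but both must be arranged compatibly; the verification that reflections act consistently on the vortex signs, so that a single product $\prod_j(z-p_j)^{s}$ is equivariant, is the delicate point. Collecting the estimates, $E(u,D_{R'})\le E(\tilde u,D_{R'})\le(1+3\delta)\pi(d^2+|d_\infty-d|)\ln R'+O(1)$ along $R'\to\infty$, which yields $\liminf_{R\to\infty}E(u,D_R)/\ln R\le(1+3\delta)\pi(d^2+|d_\infty-d|)$.
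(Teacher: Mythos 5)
Your proposal is correct and follows essentially the same route as the paper's proof (a Shafrir-type comparison argument): good radii selected from the logarithmic hypothesis, a transition annulus of fixed ratio matching $u$ with $O(1)$ cost, an equivariant competitor consisting of one degree-$d$ vortex at the origin plus $|d_\infty-d|$ unit vortices arranged in dihedral orbits at scale comparable to the radius, and the factor $1+3\delta$ obtained from the pointwise bound $(\dv v)^2\le 2|\nabla v|^2$. The only cosmetic difference is that the paper first interpolates to the canonical datum $\alpha e^{id_\infty\theta}$ on $\partial D_{R_k/2}$ and then inserts a separately constructed rescaled map (Lemma~\ref{l:construction}, where equivariance of the $\mathbb S^1$-valued part is ensured by uniqueness of the minimizing harmonic extension), whereas you glue the explicit multi-vortex phase $d\theta+s\sum_j\arg(z-p_j)$ directly to $u$ and verify its $\mu_d^\pm$-equivariance by hand.
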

\begin{proof}[Proof of Lemma~\ref{l:upper}]
The proof follows an argument by Shafrir \cite{shafrir94}.
By assumption we may fix a finite $C_*>\liminf_{R\to\infty} E(u,D_R)/\ln R$, and there exists a sequence $R_0\leq R_k\to +\infty$ such that
\begin{align*}
E(u;\partial D_{R_k})\leq \frac{C_*}{R_k}.
\end{align*}
On $\partial D_{R_k}$  we have $1/2\leq |u|\leq 3/2$  and $\deg(u;\partial D_{R_k})=d_\infty$, so we can write $u=\alpha \rho_k(\theta) e^{i (d_\infty\theta + \varphi_k)}$ for some real valued $H^1$ functions $\rho_k,\varphi_k$, where we take $\alpha=1$ if $u$ is $\mu_d^+$ equivariant, and $\alpha=i$ if $u$ is $\mu_d^-$ equivariant. Equivariance of $u$ translates into 
\begin{align*}
&\rho_k\left(e^{i\frac{\pi}{1-d}}z\right)=\rho_k(\bar z)=\rho_k(z),\quad \varphi_k\left(e^{i\frac{\pi}{1-d}}z\right)=\varphi_k(\bar z)=\varphi_k(z)\quad\forall z\in \partial D_{R_k},
\end{align*} 
and we have, denoting $\partial_\tau=e_\theta\cdot\nabla=r^{-1}\partial_\theta$,
\begin{align*}
R_k\int_{\partial D_{R_k}} \left( |\partial_\tau \rho_k|^2 + |\partial_\tau \varphi_k|^2 \right)\lesssim C_* + d_\infty^2 :=C_{**}.
\end{align*}
Since $\varphi_k$ is determined modulo $2\pi$ we may moreover assume the pointwise bound $|\varphi_k|\lesssim 1+C_{**}$.
Define a $\mu_{d}^\pm$-equivariant map $\tilde u=\alpha\tilde\rho e^{i (d_\infty\theta + \tilde\varphi)}$ in $D_{R_k}\setminus D_{R_k/2}$ by setting
\begin{align*}
\tilde\rho & =1 +\frac{\ln(2r/R_k)}{\ln 2} (\rho_k(\theta) -1)
\\
\tilde\varphi &= \frac{\ln(2r/R_k)}{\ln 2}\varphi_k(\theta).
\end{align*}
With that definition, we have $\tilde u=u$ on $\partial D_{R_k}$, $\tilde u=\alpha e^{id_\infty\theta}$ on $\partial D_{R_k/2}$, and
\begin{align*}
E(\tilde u; D_{R_k}\setminus D_{R_k/2})\lesssim 1+ C_{**}.
\end{align*}
For small $\varepsilon >0$, it is possible to construct a map $u_\varepsilon\colon D_{1/2}\to\mathbb R^2$ which is $\mu_d^{\pm}$-equivariant with $u_\varepsilon=\alpha e^{id_\infty\theta}$ on $\partial D_{1/2}$ and such that
\begin{align*}
E_\varepsilon(u_\varepsilon;D_{1/2})&:=\int_{D_{1/2}} \frac{1-\delta}{2} |\nabla  u|^2 +\delta (\dv u)^2  +\frac{1}{\varepsilon^2}W(u) \\
&\leq (1+3\delta)\pi\left(d^2 + |d_\infty-d| \right)\ln\frac 1\varepsilon + O(1)
\end{align*}
as $\varepsilon\to 0$.
To give a sketch of $u_\varepsilon$'s construction, let $N\geq 0$ be the unique nonnegative integer such that
\begin{align*}
d_\infty - d = 2(1-d)\sigma N,\qquad\sigma\in\lbrace \pm 1\rbrace.
\end{align*}
Then fix $N$ circles with radii $\lambda_j=\frac{j}{4N}$, $j=1,\ldots,N$, and construct $u_\varepsilon$ by putting one vortex of degree $d$ at the origin, and  $2(1-d)$ vortices of degree $\sigma$ equally spaced  along each circle $\partial D_{\lambda_j}$, in a way that respects the $\mu_d^{\pm}$-equivariance. Such configuration will satisfy the claimed energy bound on $u_\varepsilon$. We perform the explicit construction in Lemma~\ref{l:construction} in the $\mu_d^+$-equivariant case and for $\sigma=+1$, the other cases being similar.

 Finally, setting $\tilde u(x)=u_\varepsilon(\varepsilon x)$ for $|x|\leq R_k/2$ and $\varepsilon=1/R_k$, and combining this with the construction of $\tilde u$ in $D_{R_k}\setminus D_{R_k/2}$, we obtain a $\mu_d^{\pm}$-equivariant map $\tilde u$ in $D_{R_k}$ such that $\tilde u=u$ on $\partial D_{R_k}$ and 
\begin{align*}
\frac{1}{\pi\ln R_k}E(\tilde u; D_{R_k})\leq (1+3\delta) \left(d^2 +|d_\infty-d| \right)+ o(1),
\end{align*}
and the minimizing property of $u$ implies the same bound on $E(u;D_{R_k})$.
\end{proof} 

\begin{lemma}\label{l:construction}
For any integers $d\leq -1$, $N\geq 0$ and  $D=d+2(1-d)N$,
there exist $\varepsilon_0=\varepsilon_0(d,N)>0$ and, for $0<\varepsilon<\varepsilon_0$,
 a $\mu_d^+$-equivariant map $u_\varepsilon\colon D_{1/2}\to \R^2$ such that $u((1/2)e^{i\theta})=e^{iD\theta}$ for all $\theta\in \R$, and 
\begin{align*}
\int_{D_{1/2}}\left(\frac 12 |\nabla u_\varepsilon|^2 +\frac {1}{\varepsilon^2}W(u_\varepsilon)\right) \leq 
\pi\left(d^2 + |D-d| \right)\ln\frac 1\varepsilon + C,
\end{align*}
for some constant $C$ depending only on $d,N$.
\end{lemma}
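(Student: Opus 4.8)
The plan is to exhibit an explicit test map carrying one vortex of degree $d$ at the origin and, on each of the $N$ circles $\partial D_{\lambda_j}$ with $\lambda_j=\frac{j}{4N}$, exactly $2(1-d)$ vortices of degree $+1$, equally spaced by the angle $\pi/n$ (with $n=1-d$) and placed symmetrically about the real axis. This configuration has total degree $d+2(1-d)N=D$ and satisfies
\[
\sum_i k_i^2 = d^2 + 2(1-d)N = d^2 + |D-d|,
\]
which already pins down the expected coefficient of $\ln(1/\varepsilon)$. First I would fix a small radius $\rho>0$, independent of $\varepsilon$, so that the closed cores $\overline{D_\rho(a_i)}$ are pairwise disjoint and contained in $D_{1/2}$; on the complement $D_{1/2}\setminus\bigcup_i \overline{D_\rho(a_i)}$ I would take the canonical $\mathbb S^1$-valued map $u_\varepsilon=e^{i\Phi}$, where $\Phi$ is the multivalued harmonic phase with the prescribed singularities $\sum_i k_i\arg(z-a_i)$, corrected by a single-valued harmonic function chosen so that $\Phi=D\theta$ on $\partial D_{1/2}$.

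For the energy estimate I would rely on the classical Bethuel--Brezis--H\'elein analysis \cite{bethuel1}. Away from the cores $W(u_\varepsilon)=0$ and the Dirichlet integral of $e^{i\Phi}$ is
\[
\frac12\int_{D_{1/2}\setminus\bigcup_i D_\rho(a_i)} |\nabla\Phi|^2 = \pi\Big(\sum_i k_i^2\Big)\ln\frac1\rho + O(1),
\]
where the $O(1)$ is the renormalized (interaction plus boundary) energy, finite because the points $a_i$ are fixed, distinct, and stay at a fixed positive distance from one another and from $\partial D_{1/2}$. In each core I would write $\Phi=k_i\theta_i+\psi_i$ with $\theta_i=\arg(z-a_i)$ and $\psi_i$ smooth and harmonic near $a_i$, and set $u_\varepsilon=f(|z-a_i|/\varepsilon)\,e^{i(k_i\theta_i+\psi_i)}$ with the Lipschitz profile $f(s)=\min(s,1)$; a direct computation gives core energy $\pi k_i^2\ln(\rho/\varepsilon)+O(1)$ for the gradient part and $O(1)$ for $\varepsilon^{-2}W$, the contribution of the smooth factor $\psi_i$ being lower order. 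Summing the outer and core contributions yields exactly
\[
\frac12\int_{D_{1/2}}|\nabla u_\varepsilon|^2 + \frac1{\varepsilon^2}\int_{D_{1/2}}W(u_\varepsilon) = \pi\Big(\sum_i k_i^2\Big)\ln\frac1\varepsilon + C = \pi\big(d^2 + |D-d|\big)\ln\frac1\varepsilon + C.
\]

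Finally I would verify the $\mu_d^+$-equivariance. The chosen vortex set and degrees are invariant under the $D_{2n}$-action on the domain: rotations by $\pi/n$ preserve degrees and permute vortices, while the reflection $z\mapsto\bar z$ maps a degree-$+1$ vortex at $a$ to a degree-$+1$ vortex at $\bar a$. By uniqueness the harmonic phase $\Phi$ then inherits the symmetries $\Phi(e^{i\pi/n}z)=\Phi(z)+d\,\pi/n$ and $\Phi(\bar z)=-\Phi(z)$, which are precisely the conventions $u(e^{i\pi/n}z)=e^{id\pi/n}u(z)=\mathbf{r}_{2n}^d u(z)$ and $u(\bar z)=\overline{u(z)}$ defining $\mu_d^+$; using the same profile $f$ for all degree-$+1$ cores (which the group merely permutes) keeps the glued map equivariant. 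The hard part will be this last bookkeeping: one must place the peripheral vortices and fix the additive constants in the $\psi_i$ so that the reflection and rotation phase conventions match $\mu_d^+$ exactly, in particular along the reflection axes, while simultaneously keeping the interaction and boundary-correction energies $O(1)$ so that the coefficient of $\ln(1/\varepsilon)$ is exactly $\pi\sum_i k_i^2$ and no larger. Since only an upper bound is required, the precise value of the renormalized energy is irrelevant: only its finiteness and the sharp logarithmic coefficient matter.
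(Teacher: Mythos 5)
Your proposal is correct and follows essentially the same route as the paper: the identical symmetric vortex configuration (degree $d$ at the origin, $2(1-d)$ degree-one vortices equally spaced on each circle $\partial D_{\lambda_j}$, $\lambda_j=j/4N$), a logarithmic energy count localized at the vortex cores, and equivariance deduced from uniqueness of an $\mathbb S^1$-valued harmonic/minimizing map with symmetric data. The only difference is one of implementation: the paper glues explicit core maps (with alternating signs $(-1)^k$) to the Dirichlet-minimizing $\mathbb S^1$-valued map on the perforated domain $D_{1/2}\setminus\omega$ and invokes uniqueness of that minimizer \cite[\S~I.2]{bethuel1}, whereas you take the canonical harmonic map with prescribed point singularities and cut off its modulus near the vortices --- the same uniqueness mechanism then handles automatically the phase bookkeeping you flag as the ``hard part,'' so no additive constants actually need to be adjusted.
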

\begin{proof}[Proof of Lemma~\ref{l:construction}]
If $N=0$, i.e. $D=d$, we may simply take $u_\varepsilon(re^{i\theta})=\min(2r/\varepsilon,1)e^{id\theta}$, hence we assume $N\geq 1$.

For $j=1,\ldots, N$ we define radii $\lambda_j=j/4N$, and will construct $u_\varepsilon$ with a vortex of degree $d$ at the origin, and $2(1-d)$ vortices of degree 1 equally spaced along each circle $\partial D_{\lambda_j}$. These degree 1 vortices are centered at 
\begin{align*}
x_{j,k}=e^{ik\frac{\pi}{1-d}}\lambda_j,\qquad k=0,\ldots,2(1-d)-1,\; j=1,\ldots,N.
\end{align*}
We fix a small radius $\rho=\rho(d,N)>0$ such that the disks
$D_{2\rho}(0)$, $D_{2\rho}(x_{j,k})$,
are mutually disjoint. In the set
\begin{align*}
\omega:=D_{\rho}(0)\cup \bigcup_{j,k} D_{\rho}(x_{j,k}),
\end{align*}
we define $u_\varepsilon$  by setting
\begin{align*}
u_\varepsilon(re^{i\theta})&=\min(r/(\rho\varepsilon),1)\,e^{id\theta},\\
u_\varepsilon(x_{j,k}+re^{i\theta})&=(-1)^k \min(r/(\rho\varepsilon),1)\, e^{i\theta},
\end{align*}
for all $r\in [0,\rho)$. Then $u_\varepsilon$ is $\mu_d^+$-equivariant in $\omega$, and
\begin{align*}
\int_{\omega}\left(\frac 12 |\nabla u_\varepsilon|^2 +\frac {1}{\varepsilon^2}W(u_\varepsilon)\right) \leq 
\pi\left(d^2 + |D-d| \right)\ln\frac \rho\varepsilon + C(d,N).
\end{align*}
Since the trace of $u$ on $\partial\omega$ is smooth and $\mathbb S^1$-valued  with  
\begin{align*}
\deg(u_\varepsilon,\partial D_{\rho}(0))+\sum_{j,k}\deg(u_\varepsilon,\partial D_{\rho}(x_{j,k}))=d+2(1-d)N=D,
\end{align*}
there exists a smooth map $u_*\colon D_{1/2}\setminus\omega\to\mathbb S^1$ such that $u_*=u_\varepsilon$ on $\partial\omega$ and $u_*((1/2)e^{i\theta})=e^{iD\theta}$. Moreover, choosing this map $u_*$ minimizing the Dirichlet energy among $\mathbb S^1$-valued maps with these boundary conditions ensures that $u_*$ is $\mu_d^+$-equivariant, because the boundary conditions are equivariant and the minimizer is unique \cite[\S~I.2]{bethuel1}. Hence, setting $u_\varepsilon =u_*$ in $D_{1/2}\setminus\omega$ we obtain a $\mu_d^+$-equivariant map satisfying the conclusion of Lemma~\ref{l:construction}.
\end{proof}

\section{The stress-energy tensor and Pohozaev identities for anisotropic gradient systems}\label{s:poho}

Let $\Omega\subset\R^2$ be an open set. We consider the system
\begin{equation}\label{anisys}
\Delta u+\delta \nabla (\dv u) +\delta \curl^*(\curl u)=\nabla W(u), \  u=(u_1,u_2)\in C^2(\Omega;\R^2),
\end{equation}
with $W \in C^1(\R^2,\R)$, and $\delta\in\R$. Equation \eqref{anisys} can be written as a divergence-free condition, that is,
\begin{equation}\label{divergence free}
  \dv T =  (\nabla u)^\top \big( \Delta u+\delta \nabla (\dv u) +\delta \curl^*(\curl u)-\nabla W(u) \big)=0
\end{equation}
for the stress-energy tensor
\begin{equation*}
T(u,\nabla u)=
\left( \begin{array}{c}
T_{11}~~ T_{12} \\
T_{21} ~~ T_{22} \\
\end{array} \right),
\end{equation*}
with
\begin{align*}
 T_{11}(u,\nabla u)&
 =\frac{1}{2}|\partial_1 u|^2-\frac{1}{2}|\partial_2  u|^2+\frac{\delta}{2}|\nabla u_1|^2-\frac{\delta}{2}|\nabla u_2|^2-W(u),\\
T_{12}(u,\nabla u)&
=\partial_1 u\cdot\partial_2  u+\delta \nabla u_1\cdot \nabla u_2+\delta (\dv u)(\curl u),
\\
T_{21}(u,\nabla u)&=\partial _1 u\cdot\partial_2 u+\delta \nabla u_1\cdot \nabla u_2-\delta (\dv u)(\curl u)
\\
 T_{22}(u,\nabla u)&=\frac{1}{2}|\partial_2 u|^2-\frac{1}{2}|\partial_1 u|^2+\frac{\delta}{2}|\nabla u_2|^2-\frac{\delta}{2}|\nabla u_1|^2-W(u),
\end{align*}
where $\cdot$ stands for the Euclidean inner product. Condition (\ref{divergence free}) is an algebraic fact that follows from a long, but otherwise not difficult computation. In the isotropic case where $\delta=0$, the stress-energy tensor reduces to the symmetric matrix (cf. \cite[Section~3.1]{book} or \cite[Section~5.1]{SS}): 
\begin{equation*}
T(u,\nabla u)=\frac{1}{2}
\left( \begin{array}{cc}
|\partial_1 u|^2 - |\partial_2 u|^2 -2 W(u) & 2 \partial_1 u \!\cdot \partial_2 u \\
2 \partial_1 u \!\cdot\partial_2 u & |\partial_2 u|^2 - |\partial_1 u|^2- 2 W(u) \\
\end{array} \right).
\end{equation*}
Proceeding as in \cite[section 3.6]{book}, we are going to derive Pohozaev identities for \eqref{anisys}, with the help of the stress-energy tensor.
\begin{proposition}
Let $\Omega\subset\R^2$ be an open set, let $D_R(x_0)$ be an open disc of radius $r$ centered at $x_0$, such that 
$\overline{D_r}(x_0)\subset \Omega$, and let $u\in C^2(\Omega;\R^2)$ be a solution of \eqref{anisys}, with $W\in C^1(\R^2;\R)$. For every $x=x_0+re^{i\theta}$, we denote by $(e_r,e_\theta)$ the orthonormal basis 
$(e^{i\theta},ie^{i\theta})$, and by $\partial_r$ (resp. $\partial_\tau$), the partial derivatives with respect to the vector $e_r$ (resp. $e_\theta$).
%, and we set
%$$u=u^\nu\nu+u^\tau \tau,$$
%$$u_\nu:=\frac{\partial u}{\partial \nu}=u^\nu_\nu \nu+u^\tau_\nu \tau,$$
%$$u_\tau:=\frac{\partial u}{\partial \tau}=u^\nu_\tau \nu+u^\tau_\tau \tau,$$
%\begin{equation*}
%\nabla u(x)=\left( \begin{array}{c}
%u_{\nu}^\nu ~~ u_{\tau}^\nu \\
%u_{\nu}^\tau ~~ u_{\tau}^\tau \\
%\end{array} \right), \text{ in the basis } (\nu,\tau).
%\end{equation*}

Then, we have
\begin{align}\label{poz1}
2\int_{D_r(x_0)}W(u)&=r\int_{\partial D_r}\Big(W(u)
+\frac{1+\delta}{2}
(\partial_\tau u\cdot e_\theta)^2
+\frac{1-\delta}{2}(\partial_\tau u\cdot e_r)^2\Big) \\ 
&\quad  
- r\int_{\partial D_r}\Big(
\frac{1+\delta}{2}(\partial_ r u\cdot e_r)^2
+\frac{1-\delta}{2}(\partial_r u\cdot e_\theta)^2\Big),\nonumber
%r\int_{\partial D_r(x_0)}\Big(W(u)+\frac{1}{2}(|u_\tau|^2-|u_\nu|^2)+\frac{\delta}{2}(|\nabla u^\tau|^2-|\nabla u^\nu|^2\Big),%\Big(W(u)+\frac{1+\delta}{2}(|u^\tau_\tau|^2-|u_\nu^\nu|^2)+\frac{1-\delta}{2}(|u^\nu_\tau|^2-|u^\tau_\nu|^2\Big),
\end{align}
and
\begin{align}\label{poz2}
2\delta\int_{D_r(x_0)}(\dv u)(\curl u)&=r\int_{\partial D_r}\Big((\delta \dv u) (\curl u)\\
&\hspace{5em}-\delta (\nabla (u \cdot e_r))\cdot(\nabla( u\cdot e_\theta))
-\partial_r u\cdot \partial_\tau u\Big).\nonumber
\end{align}
\end{proposition}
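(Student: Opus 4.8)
The plan is to obtain both identities directly from the pointwise conservation law $\dv T = 0$ recorded in \eqref{divergence free}, by pairing it with two distinguished vector fields on $D_r(x_0)$ and applying the divergence theorem. Throughout I write $X(x)=x-x_0$ for the dilation field and $Y(x)=(x-x_0)^\perp$ for its quarter-turn rotation. Two purely algebraic observations drive the reduction: summing the diagonal entries of $T$ gives $\tr T = T_{11}+T_{22} = -2W(u)$, while the off-diagonal difference gives $T_{21}-T_{12} = -2\delta (\dv u)(\curl u)$; both follow by direct inspection of the four listed entries of $T$.

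For \eqref{poz1} I would integrate $\dv(T^\top X)$ over $D_r(x_0)$. Since each row of $T$ is divergence free, the bulk contribution collapses to $\sum_{i,j}T_{ij}\partial_j X_i = \tr T = -2W(u)$, because $\nabla X = I$. The divergence theorem rewrites the left side as the boundary flux $\int_{\partial D_r}(T^\top X)\cdot\nu$, and on $\partial D_r$ one has $X = r\,e_r$ and $\nu = e_r$, so this flux equals $r\int_{\partial D_r} e_r^\top T\, e_r$. The identity thus reduces to $2\int_{D_r}W = -r\int_{\partial D_r} e_r^\top T\, e_r$, and it remains to evaluate $e_r^\top T\, e_r$ in the orthonormal frame $(e_r,e_\theta)$. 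Splitting $T$ into its isotropic part, its symmetric anisotropic part (the $\nabla u_k$ quadratic terms), and its antisymmetric anisotropic part, the last drops out of the quadratic form $e_r^\top T\, e_r$; evaluating the first two with $|\nabla u|^2=|\partial_r u|^2+|\partial_\tau u|^2$ and $\sum_k(\partial_k u\cdot e_r)^2+\sum_k(\partial_k u\cdot e_\theta)^2=|\nabla u|^2$ produces exactly the coefficients $(1\pm\delta)/2$ in front of $(\partial_r u\cdot e_r)^2,\ (\partial_r u\cdot e_\theta)^2,\ (\partial_\tau u\cdot e_r)^2,\ (\partial_\tau u\cdot e_\theta)^2$, together with the $W$ term, matching \eqref{poz1} after negation.

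For \eqref{poz2} I would run the same argument with $Y$ in place of $X$. Here $\nabla Y$ is the rotation matrix $\left(\begin{smallmatrix}0&-1\\1&0\end{smallmatrix}\right)$, read off from $Y=(-(x_2-x_{0,2}),\,x_1-x_{0,1})$, so the bulk term becomes $\sum_{i,j}T_{ij}\partial_j Y_i = T_{21}-T_{12} = -2\delta(\dv u)(\curl u)$, selecting precisely the antisymmetric part of $T$. The boundary flux is $r\int_{\partial D_r} e_\theta^\top T\, e_r$, since $Y = r\,e_\theta$ and $\nu = e_r$ on $\partial D_r$, giving $2\delta\int_{D_r}(\dv u)(\curl u) = -r\int_{\partial D_r} e_\theta^\top T\, e_r$. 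Evaluating $e_\theta^\top T\, e_r$ in the polar frame, the isotropic part contributes $\partial_r u\cdot\partial_\tau u$, the symmetric anisotropic part contributes $\delta\,\nabla(u\cdot e_r)\cdot\nabla(u\cdot e_\theta)$ (with $e_r,e_\theta$ frozen as constant vectors at the boundary point), and the antisymmetric anisotropic part contributes $-\delta(\dv u)(\curl u)$; negating yields exactly the boundary integrand of \eqref{poz2}.

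The only genuinely delicate point is the frame evaluation of the two boundary quadratic forms $e_r^\top T\, e_r$ and $e_\theta^\top T\, e_r$: one must rewrite the anisotropic terms $|\nabla u_k|^2$ and $\nabla u_1\cdot\nabla u_2$ in terms of the four components $\partial_r u\cdot e_r,\ \partial_r u\cdot e_\theta,\ \partial_\tau u\cdot e_r,\ \partial_\tau u\cdot e_\theta$, keeping careful track of which contributions are symmetric (hence survive in $e_r^\top T\, e_r$) and which are antisymmetric (hence survive only in $e_\theta^\top T\, e_r$), as well as of the frozen-frame interpretation of $\nabla(u\cdot e_r)$. Everything else — the conservation law, the two algebraic reductions of $\tr T$ and $T_{21}-T_{12}$, and the divergence theorem — is routine.
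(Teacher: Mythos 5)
Your proof is correct and follows essentially the same route as the paper: the paper applies the divergence theorem to the vector fields $X=T^\top x$ and $Y=T^\top x^{\perp}$ (identical to your $T^\top X$ and $T^\top Y$ with $x_0=0$), uses the same algebraic identities $\tr T=-2W(u)$ and $T_{21}-T_{12}=-2\delta(\dv u)(\curl u)$, and evaluates the same boundary fluxes $r\,e_r^\top T\,e_r$ and $r\,e_\theta^\top T\,e_r$. The only difference is presentational: you organize the boundary computation via the symmetric/antisymmetric decomposition of $T$ and the polar frame, where the paper simply records the outcome of ``a long but otherwise not difficult computation.''
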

\begin{proof}
Without loss of generality we take $x_0=0$. We derive \eqref{poz1} (resp. \eqref{poz2}), by applying the divergence theorem to the vector fields $X=(x_1T_{11}+x_2T_{21},x_1T_{12}+x_2T_{22})$ (resp. $Y=(-x_2T_{11}+x_1T_{21},-x_2T_{12}+x_1T_{22})$ in the disc $D_r(x_0)$, where we have set $x=(x_1,x_2)$. In view of \eqref{divergence free}, one can check that $\dv X=T_{11}+T_{22}=-2W(u)$, while $\dv Y=-T_{12}+T_{21}=-2\delta (\dv u)(\curl u)$. Moreover a long but otherwise not difficult computation gives:
\begin{align*}
X\cdot x&=-r^2\Big(W(u)+\frac{1+\delta}{2}
(\partial_\tau u\cdot e_\theta)^2+\frac{1-\delta}{2}(\partial_\tau u\cdot e_r)^2
\\
&\hspace{7em }
-\frac{1+\delta}{2}(\partial_ r u\cdot e_r)^2-\frac{1-\delta}{2}(\partial_r u\cdot e_\theta)^2\Big),
\end{align*}
and
\begin{align*}
Y\cdot x=-r^2   \Big(\delta (\dv u) (\curl u)-\delta (\nabla (u \cdot e_r))\cdot(\nabla( u\cdot e_\theta))-\partial_r u\cdot \partial_\tau u\Big).
\end{align*}
\end{proof}

\section*{Acknowledgments}

\bibliographystyle{acm}
\bibliography{ref}

\end{document}